\newcounter{dummy} \numberwithin{dummy}{section}
\newtheorem{defi}[dummy]{Definition}
\newtheorem{algorithm}{Weak Galerkin Algorithm}
\renewcommand{\ldots}{\dotsc}
\newcommand{\bu}{{\bf u}}
\newcommand{\bw}{{\bf w}}
\newcommand{\be}{{\bf e}}
\newcommand{\bv}{{\bf v}}
\def\T{{\mathcal T}}
\def\E{{\mathcal E}}
\def\V{{\mathcal V}}
\def\pT{{\partial T}}
\def\l{{\langle}}
\def\r{{\rangle}}
\def\dw{{\mathcal \nabla_w\cdot}}
\def\T{{\mathcal T}}
\def\E{{\mathcal E}}
\def\bbf{{\bf f}}
\def\bg{{\bf g}}
\def\bn{{\bf n}}
\def\bbQ{\mathbb{Q}}
\def\3bar{{|\hspace{-.02in}|\hspace{-.02in}|}}
\title{A Weak Galerkin Finite Element Method for the Stokes Equations}
\author{Junping Wang\thanks{Division of Mathematical Sciences, National
Science Foundation, Arlington, VA 22230 (jwang@\break nsf.gov). The
research of Wang was supported by the NSF IR/D program, while
working at the Foundation. However, any opinion, finding, and
conclusions or recommendations expressed in this material are those
of the author and do not necessarily reflect the views of the
National Science Foundation.} \and Xiu Ye\thanks{Department of
Mathematics, University of Arkansas at Little Rock, Little Rock, AR
72204 (xxye@ualr.edu). This research was supported in part by
National Science Foundation Grant DMS-1115097}}
\begin{document}
\maketitle

\begin{abstract}
This paper introduces a weak Galerkin (WG) finite element method for
the Stokes equations in the primary velocity-pressure formulation.
This WG method is equipped with stable finite elements consisting of
usual polynomials of degree $k\ge 1$ for the velocity and
polynomials of degree $k-1$ for the pressure, both are
discontinuous. The velocity element is enhanced by polynomials of
degree $k-1$ on the interface of the finite element partition. All
the finite element functions are discontinuous for which the usual
gradient and divergence operators are implemented as distributions
in properly-defined spaces. Optimal-order error estimates are
established for the corresponding numerical approximation in various
norms. It must be emphasized that the WG finite element method is
designed on finite element partitions consisting of arbitrary shape
of polygons or polyhedra which are shape regular.
\end{abstract}

\begin{keywords}
Weak Galerkin, finite element methods, the Stokes equations,
polyhedral meshes.
\end{keywords}

\begin{AMS}
Primary, 65N15, 65N30, 76D07; Secondary, 35B45, 35J50
\end{AMS}
\pagestyle{myheadings}

\section{Introduction}
In this paper, we are concerned with the development of weak
Galerkin (WG) finite element methods for the Stokes problem which
seeks unknown functions $\bu$ and $p$ satisfying
\begin{eqnarray}
-\Delta\bu+\nabla p&=& \bbf\quad
\mbox{in}\;\Omega,\label{moment}\\
\nabla\cdot\bu &=&0\quad \mbox{in}\;\Omega,\label{cont}\\
\bu&=&\bg\quad \mbox{on}\; \partial\Omega,\label{bc}
\end{eqnarray}
where $\Omega$ is a polygonal or polyhedral domain in
$\mathbb{R}^d\; (d=2,3)$.

The weak form in the primary velocity-pressure formulation for the
Stokes problem (\ref{moment})--(\ref{bc}) seeks $\bu\in
[H^1(\Omega)]^d$ and $p\in L_0^2(\Omega)$ satisfying $\bu=\bg$ on
$\partial\Omega$ and
\begin{eqnarray}
(\nabla\bu,\nabla\bv)-(\nabla\cdot\bv, p)&=&({\bf f}, \bv),\label{w1}\\
(\nabla\cdot\bu, q)&=&0,\label{w2}
\end{eqnarray}
for all $\bv\in [H_0^1(\Omega)]^d$ and $q\in L_0^2(\Omega)$. The
conforming finite element method for (\ref{moment})--(\ref{bc})
developed over the last several decades is based on the weak
formulation (\ref{w1})-(\ref{w2}) by constructing a pair of finite
element spaces satisfying the {\em inf-sup} condition of Babu\u{s}ka
\cite{babuska} and Brezzi \cite{brezzi}. Readers are referred to
\cite{gr} for specific examples and details in the classical finite
element methods for the Stokes equations.

Weak Galerkin refers to a general finite element technique for
partial differential equations in which differential operators are
approximated by weak forms as distributions for generalized
functions. Thus, two of the key features in weak Galerkin methods
are (1) the approximating functions are discontinuous, and (2) the
usual derivatives are taken as distributions or approximations of
distributions. A weak Galerkin method was first introduced and
analyzed for second order elliptic equations in \cite{wy} and in a
conference in Nankai University in the summer of 2011 by one of the
authors. The objective of this paper is to develop a weak Galerkin
finite element method for (\ref{moment})-(\ref{bc}) that is
efficient and robust by allowing the use of discontinuous
approximating functions on finite element partitions consisting of
arbitrary polygons or polyhedra with certain shape regularity.

In general, weak Galerkin finite element formulations for partial
differential equations can be derived naturally by replacing usual
derivatives by weakly-defined derivatives in the corresponding
variational forms, with the option of adding a stabilization term to
enforce a weak continuity of the approximating functions. For the
Stokes problem (\ref{moment})-(\ref{bc}) interpreted by the
variational formulation (\ref{w1})-(\ref{w2}), the two principle
differential operators are the gradient and the divergence operator
defined in the Sobolev space $[H^1(\Omega)]^d$. Formally, our weak
Galerkin method for the Stokes problem would take the following
form: Find $\bu_h$ and $p_h$ from properly-defined finite element
spaces satisfying
\begin{eqnarray}
(\nabla_w\bu_h,\nabla_w\bv)-(\nabla_w\cdot\bv,p_h)+s(\bu_h,\bv)&=&({\bf f},\bv),\label{w3}\\
(\nabla_w\cdot\bu_h,q)&=&0\label{w4}
\end{eqnarray}
for all test functions $\bv$ and $q$ in test spaces. Here $\nabla_w$
is a weak gradient and $\nabla_w\cdot$ is a weak divergence operator
to be detailed in this study (see Section \ref{Section:wg-wd} for
details). The bilinear form $s(\cdot,\cdot)$ in (\ref{w3}) is a
parameter free stabilizer that shall enforce a certain weak
continuity for the underlying approximating functions. The use of
totally discontinuous functions and weak derivatives in the WG
formulation provides the numerical scheme with many nice features.
First, the construction of stable elements for the Stokes equations
under WG formulation is straight forward with standard polynomials.
Secondly, the WG method allows the use of finite element partitions
with arbitrary shape of polygons in 2D or polyhedra in 3D with
certain shape regularity. The later property provides a convenient
and useful flexibility in both numerical approximation and mesh
generation. Thirdly, our WG formulation is parameter-free and has
competitive number of unknowns since lower degree of polynomials are
used on element boundaries, and the unknowns corresponding to the
interior of each element can be eliminated from the system.

The paper is organized as follows. In Section
\ref{Section:preliminaries}, we introduce some standard notations in
Sobolev spaces. Two weakly-defined differential operators, weak
gradient and weak divergence, are introduced in Section
\ref{Section:wg-wd}. The WG finite element scheme for the Stokes
problem (\ref{moment})-(\ref{cont}) is developed in Section
\ref{Section:wg-fem}. In Section \ref{Section:stability}, we shall
study the stability and solvability of the WG scheme. In particular,
the usual {\em inf-sup} condition is established for the WG scheme.
In Section \ref{Section:error-equations}, we shall derive an error
equation for the WG approximations. Optimal-order error estimates
for the WG finite element approximations are derived in Section
\ref{Section:error-analysis} in virtually an $H^1$ norm for the
velocity, and $L^2$ norm for both the velocity and the pressure. In
Section \ref{Section:concluding}, we make some concluding remarks by
mentioning some outstanding issues for future consideration.
Finally, we present some technical estimates for quantities related
to the local $L^2$ projections into various finite element spaces in
Appendix \ref{appendix-1}.

\section{Preliminaries and Notations}\label{Section:preliminaries}

Let $D$ be any open bounded domain with Lipschitz continuous
boundary in $\mathbb{R}^d, d=2, 3$. We use the standard definition
for the Sobolev space $H^s(D)$ and the associated inner product
$(\cdot,\cdot)_{s,D}$, norm $\|\cdot\|_{s,D}$, and seminorm
$|\cdot|_{s,D}$ for any $s\ge 0$. For example, for any integer $s\ge
0$, the seminorm $|\cdot|_{s, D}$ is given by
$$
|v|_{s, D} = \left( \sum_{|\alpha|=s} \int_D |\partial^\alpha v|^2
dD \right)^{\frac12}
$$
with the usual notation
$$
\alpha=(\alpha_1, \ldots, \alpha_d), \quad |\alpha| =
\alpha_1+\ldots+\alpha_d,\quad
\partial^\alpha =\prod_{j=1}^d\partial_{x_j}^{\alpha_j}.
$$
The Sobolev norm $\|\cdot\|_{m,D}$ is given by
$$
\|v\|_{m, D} = \left(\sum_{j=0}^m |v|^2_{j,D} \right)^{\frac12}.
$$

The space $H^0(D)$ coincides with $L^2(D)$, for which the norm and
the inner product are denoted by $\|\cdot \|_{D}$ and
$(\cdot,\cdot)_{D}$, respectively. When $D=\Omega$, we shall drop
the subscript $D$ in the norm and inner product notation.

The space $H({\rm div};D)$ is defined as the set of vector-valued
functions on $D$ which, together with their divergence, are square
integrable; i.e.,
\[
H({\rm div}; D)=\left\{ \bv: \ \bv\in [L^2(D)]^d, \nabla\cdot\bv \in
L^2(D)\right\}.
\]
The norm in $H({\rm div}; D)$ is defined by
$$
\|\bv\|_{H({\rm div}; D)} = \left( \|\bv\|_{D}^2 + \|\nabla
\cdot\bv\|_{D}^2\right)^{\frac12}.
$$

\section{Weak Differential Operators and Their Approximations}\label{Section:wg-wd}

The key to weak Galerkin methods is the use of weak derivatives in
the place of strong derivatives that define the weak formulation for
the underlying partial differential equations. The two differential
operators used in the weak formulation (\ref{w1}) and (\ref{w2}) are
gradient and divergence. Thus, it is essential to introduce a weak
version for both the gradient and the divergence operator. In
\cite{wy-mixed}, a weak divergence operator has been introduced and
employed to the mixed formulation of second order elliptic
equations. In \cite{wy} and \cite{mwy}, a weak gradient operator was
introduced for scalar functions. Those weakly defined differential
operators shall be employed to the Stokes problem
(\ref{w1})-(\ref{w2}) in a weak Galerkin approximation. For
convenience, the rest of the section will review the definition for
the weak gradient and the weak divergence, respectively. Note that
the weak gradient shall be applied to each component when the
underlying function is vector-valued, as is the case for the Stokes
problem.

\subsection{Weak gradient and discrete weak gradient}

Let $K$ be any  polygonal or polyhedral domain with boundary
$\partial K$. A  weak vector-valued function on the region $K$
refers to a vector-valued function $\bv=\{\bv_0, \bv_b\}$ such that
$\bv_0\in [L^2(K)]^d$ and $\bv_b\in [H^{\frac12}(\partial K)]^d$.
The first component $\bv_0$ can be understood as the value of $\bv$
in $K$, and the second component $\bv_b$ represents $\bv$ on the
boundary of $K$. Note that $\bv_b$ may not necessarily be related to
the trace of $\bv_0$ on $\partial K$ should a trace be well-defined.
Denote by $\V(K)$ the space of weak functions on $K$; i.e.,
\begin{equation}\label{hi.888}
\V(K)= \{\bv=\{\bv_0, \bv_b \}:\ \bv_0\in [L^2(K)]^d,\; \bv_b\in
[H^{\frac12}(\partial K)]^d\}.
\end{equation}
The weak gradient operator is defined as follows.
\medskip

\begin{defi}
The dual of $[L^2(K)]^d$ can be identified with itself by using the
standard $L^2$ inner product as the action of linear functionals.
With a similar interpretation, for any $\bv\in \V(K)$, the weak
gradient of $\bv$ is defined as a linear functional $\nabla_w \bv$ in
the dual space of $[H(div,K)]^d$ whose action on each $q\in [H(div,K)]^d$ is
given by
\begin{equation}\label{wg}
(\nabla_w \bv, q)_K = -(\bv_0, \nabla\cdot q)_K + \langle \bv_b,
q\cdot\bn\rangle_{\partial K},
\end{equation}
where $\bn$ is the outward normal direction to $\partial K$,
$(\bv_0,\nabla\cdot q)_K=\int_K \bv_0 (\nabla\cdot q)dK$ is the action
of $\bv_0$ on $\nabla\cdot q$, and $\langle \bv_b,
q\cdot\bn\rangle_{\partial K}=\int_{\partial K}\bv_bq\cdot\bn ds$ is the action of $q\cdot\bn$ on
$\bv_b\in [H^{\frac12}(\partial K)]^d$.
\end{defi}

\medskip

The Sobolev space $[H^1(K)]^d$ can be embedded into the space $\V(K)$ by
an inclusion map $i_\V: \ [H^1(K)]^d\to \V(K)$ defined as follows
$$
i_\V(\phi) = \{\phi|_{K}, \phi|_{\partial K}\},\qquad \phi\in [H^1(K)]^d.
$$
With the help of the inclusion map $i_\V$, the Sobolev space $[H^1(K)]^d$
can be viewed as a subspace of $\V(K)$ by identifying each $\phi\in
[H^1(K)]^d$ with $i_\V(\phi)$.

Let $P_{r}(K)$ be the set of polynomials on $K$ with degree no more
than $r$.

\begin{defi}
The discrete weak gradient operator, denoted by
$\nabla_{w,r, K}$, is defined as the unique polynomial
$(\nabla_{w,r, K}\bv) \in [P_r(K)]^{d\times d}$ satisfying the
following equation,
\begin{equation}\label{d-g}
(\nabla_{w,r, K}\bv, q)_K = -(\bv_0,\nabla\cdot q)_K+ \langle \bv_b,
q\cdot\bn\rangle_{\partial K},\qquad \forall q\in [P_r(K)]^{d\times d}.
\end{equation}
\end{defi}

\subsection{Weak divergence and discrete weak divergence}

To define weak divergence, we require weak function $\bv=\{\bv_0,
\bv_b\}$ such that $\bv_0\in [L^2(K)]^d$ and $\bv_b\cdot\bn\in
H^{-\frac12}(\partial K)$. Denote by $V(K)$ the space of weak vector-valued functions on $K$;
i.e.,
\begin{equation}\label{hi.999}
{V}(K) = \{\bv=\{\bv_0, \bv_b \}:\ \bv_0\in [L^2(K)]^d,\;
\bv_b\cdot\bn\in H^{-\frac12}(\partial K)\}.
\end{equation}
A weak divergence operator can be defined as follows.
\medskip

\begin{defi}
The dual of $L^2(K)$ can be identified with itself by using the
standard $L^2$ inner product as the action of linear functionals.
With a similar interpretation, for any $\bv\in V(K)$, the weak
divergence of $\bv$ is defined as a linear functional $\nabla_w
\cdot\bv$ in the dual space of $H^1(K)$ whose action on each
$\varphi\in H^1(K)$ is given by
\begin{equation}\label{wd}
(\nabla_w\cdot\bv, \varphi)_K = -(\bv_0, \nabla\varphi)_K + \langle
\bv_b\cdot\bn, \varphi\rangle_{\partial K},
\end{equation}
where $\bn$ is the outward normal direction to $\partial K$,
$(\bv_0,\nabla\varphi)_K$ is the
action of $\bv_0$ on $\nabla\varphi$, and $\langle \bv_b\cdot\bn,
\varphi\rangle_{\partial K}$ is the action of $\bv_b\cdot\bn$ on
$\varphi\in H^{\frac12}(\partial K)$.
\end{defi}

The Sobolev space $[H^1(K)]^d$ can be embedded into the space $V(K)$
by an inclusion map $i_V: \ [H^1(K)]^d\to V(K)$ defined as follows
$$
i_V(\phi) =\{\phi|_{K},\ \phi|_{\partial K}\},\qquad \phi\in
[H^1(K)]^d.
$$
\begin{defi}
A discrete weak divergence operator, denoted by
$\nabla_{w,r,K}\cdot$, is defined as the unique polynomial
$(\nabla_{w,r,K}\cdot\bv) \in P_r(K)$ that satisfies the following
equation
\begin{equation}\label{d-d}
(\nabla_{w,r,K}\cdot\bv, \varphi)_K = -(\bv_0, \nabla\varphi)_K + \langle
\bv_b\cdot\bn, \varphi\rangle_{\partial K},\qquad
\forall \varphi\in P_r(K).
\end{equation}
\end{defi}

\section{A Weak Galerkin Finite Element Scheme}\label{Section:wg-fem}

Let ${\cal T}_h$ be a  partition of the domain $\Omega$ with mesh
size $h$ that consists of arbitrary polygons/polyhedra. Assume that the
partition ${\cal T}_h$ is WG shape regular - defined by a set of
conditions as detailed in \cite{wy-mixed} and \cite{mwy}. Denote by
${\cal E}_h$ the set of all edges/flat faces in ${\cal T}_h$, and let
${\cal E}_h^0={\cal E}_h\backslash\partial\Omega$ be the set of all
interior edges/faces.

For any integer $k\ge 1$, we define a weak Galerkin finite element space for
the velocity variable as follows
\[
V_h=\left\{ \bv=\{\bv_0, \bv_b\}:\ \{\bv_0, \bv_b\}|_{T}\in
[P_{k}(T)]^d\times [P_{k-1}(e)]^d,\ e\subset\pT\right\}.
\]
We would like to emphasize that there is only a single value $\bv_b$
defined on each edge $e\in\E_h$. For the pressure variable, we have
the following finite element space
\[
W_h=\left\{q:\ q\in L_0^2(\Omega), \ q|_T\in P_{k-1}(T)\right\}.
\]
Denote by $V_h^0$ the subspace of $V_h$ consisting of discrete weak
functions with vanishing boundary value; i.e.,
\[
V_h^0=\left\{\bv=\{\bv_0, \bv_b\}\in V_h, \bv_b=0\ \mbox{on}\
\partial\Omega \right\}.
\]
The discrete weak gradient $\nabla_{w,k-1}$ and the discrete weak
divergence $(\nabla_{w,k-1}\cdot)$ on the finite element space $V_h$
can be computed by using (\ref{d-g}) and (\ref{d-d}) on each element
$T$, respectively. More precisely, they are given by
\begin{eqnarray*}
(\nabla_{w,k-1}\bv)|_T &=&\nabla_{w,k-1, T} (\bv|_T),\qquad \forall\
\bv \in V_h,\\
(\nabla_{w,k-1}\cdot\bv)|_T &=&\nabla_{w,k-1, T}\cdot
(\bv|_T),\qquad \forall \ \bv \in V_h.
\end{eqnarray*}
For simplicity of notation, from now on we shall drop the subscript
$k-1$ in the notation $\nabla_{w,k-1}$ and $(\nabla_{w,k-1}\cdot)$
for the discrete weak gradient and the discrete weak divergence. The
usual $L^2$ inner product can be written locally on each element as
follows
\begin{eqnarray*}
(\nabla_w\bv,\ \nabla_w\bw)&=&\sum_{T\in\T_h}(\nabla_w\bv,\ \nabla_w\bw)_T,\\
(\nabla_w\cdot\bv,\ q)&=&\sum_{T\in\T_h}(\nabla_w\cdot\bv,\ q)_T.
\end{eqnarray*}

Denote by $Q_{0}$ the $L^2$ projection operator from $[L^2(T)]^d$
onto $[P_k(T)]^d$. For each edge/face $e\in {\cal E}_h$, denote by
$Q_{b}$ the $L^2$ projection from $[L^2(e)]^d$ onto $[P_{k-1}(e)]^d$.
We shall combine $Q_0$ with $Q_b$ by writing $Q_h=\{Q_0,Q_b\}$.

We are now in a position to describe a weak Galerkin finite element
scheme for the Stokes equations (\ref{moment})-(\ref{bc}). To this
end, we first introduce three bilinear forms as follows
\begin{eqnarray*}
s(\bv,\;\bw) &= &\sum_{T\in {\cal T}_h}h_T^{-1}\l Q_b\bv_0-\bv_b,\;\;Q_b\bw_0-\bw_b\r_\pT,\\
a(\bv,\ \bw)&=&(\nabla_w\bv,\ \nabla_w\bw)+s(\bv,\bw),\\
b(\bv,\ q)&=&(\nabla_w\cdot\bv,\ q).
\end{eqnarray*}

\begin{algorithm}
A numerical approximation for (\ref{moment})-(\ref{bc})
can be obtained by seeking $\bu_h=\{\bu_0,\bu_b\}\in V_h$ and
$p_h\in W_h$ such that $\bu_b=Q_b\bg$ on $\partial\Omega$ and
\begin{eqnarray}
a(\bu_h,\ \bv)-b(\bv,\;p_h)&=&(f,\;\bv_0),\label{wg1}\\
b(\bu_h,\;q)&=&0,\label{wg2}
\end{eqnarray}
for all $\bv=\{\bv_0,\bv_b\}\in V_h^0$ and $q\in W_h$.
\end{algorithm}

\section{Stability and Solvability}\label{Section:stability}

The WG finite element scheme (\ref{wg1})-(\ref{wg2}) is a typical
saddle-point problem which can be analyzed by using the well known
theory developed by Babu\u{s}ka \cite{babuska} and Brezzi
\cite{brezzi}. The core of the theory is to verify two properties:
(1) boundedness and a certain coercivity for the bilinear form
$a(\cdot,\cdot)$, and (2) boundedness and {\em inf-sup} condition
for the bilinear form $b(\cdot,\cdot)$.

The finite element space $V_h^0$ is a normed linear space with a
triple-bar norm given by
\begin{equation}\label{3barnorm}
\3bar
\bv\3bar^2=\sum_{T\in\T_h}\|\nabla_w\bv\|_T^2+\sum_{T\in\T_h}h_T^{-1}\|Q_b\bv_0-\bv_b\|_{\partial
T}^2.
\end{equation}
We claim that $\3bar \cdot \3bar$ indeed provides a norm in $V_h^0$.
For simplicity, we shall only verify the positive length property
for $\3bar \cdot \3bar$. Assume that $\3bar \bv \3bar=0$ for some
$\bv\in V_h^0$. It follows that
$$
0=(\nabla_w\bv,\nabla_w\bv)+\sum_{T\in\T_h}h_T^{-1}\l
Q_b\bv_0-\bv_b,\ Q_b\bv_0-\bv_b\r_\pT,
$$
which implies that $\nabla_w \bv=0$ on each element $T$ and
$Q_b\bv_0=\bv_b$ on $\pT$. Thus, we have from the definition
(\ref{d-g}) that for any $\tau\in[P_{k-1}(T)]^{d\times d}$
\begin{eqnarray*}
0&=&(\nabla_w \bv,\tau)_T\\
&=&-(\bv_0,\nabla\cdot \tau)_T+\langle \bv_b,\tau\cdot\bn\rangle_\pT\\
&=&(\nabla \bv_0,\tau)_T-\langle \bv_0-\bv_b,
\tau\cdot\bn\rangle_\pT\\
&=&(\nabla \bv_0,\tau)_T-\langle Q_b\bv_0-\bv_b,
\tau\cdot\bn\rangle_\pT\\
&=&(\nabla \bv_0,\tau)_T.
\end{eqnarray*}
Letting $\tau=\nabla \bv_0$ in the equation above yields $\nabla
\bv_0=0$ on $T\in {\cal T}_h$. It follows that $\bv_0=const$ on
every $T\in\T_h$. This, together with the fact that $Q_b\bv_0=\bv_b$
on $\partial T$ and $\bv_b=0$ on $\partial\Omega$, implies that
$\bv_0=0$ and $\bv_b=0$.

\smallskip
Note that $\3bar \cdot \3bar$ defines only a semi-norm in $V_h$. It
is not hard to see that $a(\bv,\;\bv)=\3bar \bv\3bar^2$ for any
$\bv\in V_h$. In fact, the trip-bar norm is equivalent to the
standard $H^1$-norm, but was defined for weak finite element
functions. It follows from the definition of $\3bar\cdot\3bar$ and
the usual Cauchy-Schwarz inequality that the following boundedness
and coercivity hold true for the bilinear form $a(\cdot,\cdot)$.

\begin{lemma}\label{coercivity+boundedness}
For any $\bv,\bw\in V_h^0$, we have
\begin{eqnarray}
|a(\bv,\bw)|&\le& \3bar\bv\3bar\3bar\bw\3bar,\label{bd}\\
a(\bv,\bv)&=&\3bar\bv\3bar^2.\label{elliptic}
\end{eqnarray}
\end{lemma}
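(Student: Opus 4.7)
The plan is to observe that both claims follow directly from the definitions together with elementary applications of the Cauchy--Schwarz inequality; there is no genuine obstacle here, since the $\3bar\cdot\3bar$ norm is designed to be the energy norm induced by $a(\cdot,\cdot)$.

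First I would handle the coercivity identity \eqref{elliptic}. Unwinding $a(\bv,\bv)=(\nabla_w\bv,\nabla_w\bv)+s(\bv,\bv)$ using the element-by-element decomposition of the global $L^2$ inner product, one gets
\begin{equation*}
a(\bv,\bv)=\sum_{T\in\T_h}\|\nabla_w\bv\|_T^2+\sum_{T\in\T_h}h_T^{-1}\l Q_b\bv_0-\bv_b,\,Q_b\bv_0-\bv_b\r_{\pT},
\end{equation*}
which is exactly $\3bar\bv\3bar^2$ by the definition \eqref{3barnorm}. No further work is required.

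Next I would prove the boundedness \eqref{bd} in two stages. On each $T$, the Cauchy--Schwarz inequality in $[L^2(T)]^{d\times d}$ gives $(\nabla_w\bv,\nabla_w\bw)_T\le\|\nabla_w\bv\|_T\|\nabla_w\bw\|_T$, and similarly on each $\pT$ one has
\begin{equation*}
h_T^{-1}\l Q_b\bv_0-\bv_b,\,Q_b\bw_0-\bw_b\r_{\pT}\le h_T^{-1/2}\|Q_b\bv_0-\bv_b\|_{\pT}\cdot h_T^{-1/2}\|Q_b\bw_0-\bw_b\|_{\pT}.
\end{equation*}
Summing over $T\in\T_h$ and applying the discrete Cauchy--Schwarz inequality to each sum yields
\begin{equation*}
|(\nabla_w\bv,\nabla_w\bw)|\le A_1 B_1,\qquad |s(\bv,\bw)|\le A_2 B_2,
\end{equation*}
where $A_i,B_i$ denote the two contributions to $\3bar\bv\3bar,\3bar\bw\3bar$ respectively.

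Finally I would combine the two estimates using the elementary inequality $A_1B_1+A_2B_2\le (A_1^2+A_2^2)^{1/2}(B_1^2+B_2^2)^{1/2}$, which is just Cauchy--Schwarz in $\mathbb{R}^2$. This delivers $|a(\bv,\bw)|\le\3bar\bv\3bar\,\3bar\bw\3bar$, completing the proof. The only subtle point worth flagging is the need to split the stabilizer as $h_T^{-1}=h_T^{-1/2}\cdot h_T^{-1/2}$ before applying Cauchy--Schwarz on $\pT$, so that the two resulting factors group cleanly into the boundary contributions of the triple-bar norm.
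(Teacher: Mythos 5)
Your proof is correct and follows exactly the route the paper intends: the paper simply asserts that the lemma "follows from the definition of $\3bar\cdot\3bar$ and the usual Cauchy--Schwarz inequality," and your argument fills in precisely those details (the identity \eqref{elliptic} by unwinding the definition, and \eqref{bd} by termwise Cauchy--Schwarz followed by the discrete Cauchy--Schwarz in $\mathbb{R}^2$). Nothing further is needed.
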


\medskip

In addition to the projection $Q_h=\{Q_0,Q_b\}$ defined in the
previous section, let $\bbQ_h$ and ${\bf Q}_h$ be two local $L^2$
projections onto $P_{k-1}(T)$ and $[P_{k-1}(T)]^{d\times d}$,
respectively.

\begin{lemma}\label{lem-1-0}
The projection operators $Q_h$, ${\bf Q}_h$, and $\bbQ_h$ satisfy
the following commutative properties
\begin{eqnarray}\label{key}
\nabla_w (Q_h \bv) &=& {\bf Q}_h (\nabla\bv),\qquad\forall \ \bv\in
[H^1(\Omega)]^d,\\
\label{key11} \nabla_w\cdot (Q_h\bv)
&=&\bbQ_h(\nabla\cdot\bv),\qquad\forall\ \bv\in H({\rm div},\Omega).
\end{eqnarray}
\end{lemma}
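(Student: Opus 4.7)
The plan is to verify both identities elementwise, by testing each side against an arbitrary polynomial of the appropriate type and reducing the discrete weak definitions to ordinary integration by parts once the $L^2$ projections $Q_0$ and $Q_b$ have been stripped off.

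For the commutativity of the gradient (\ref{key}), I would fix $T\in\T_h$ and an arbitrary test tensor $q\in [P_{k-1}(T)]^{d\times d}$. Applying the definition (\ref{d-g}) of the discrete weak gradient to $Q_h\bv=\{Q_0\bv,Q_b\bv\}$ gives
\[
(\nabla_w(Q_h\bv),q)_T = -(Q_0\bv,\nabla\cdot q)_T + \langle Q_b\bv,\, q\cdot\bn\rangle_\pT.
\]
Since $\nabla\cdot q\in [P_{k-2}(T)]^d\subset [P_k(T)]^d$ and $(q\cdot\bn)|_e\in [P_{k-1}(e)]^d$ on each face $e\subset\pT$, the defining orthogonality of $Q_0$ and $Q_b$ allows me to replace $Q_0\bv$ by $\bv$ in the first inner product and $Q_b\bv$ by $\bv$ in the second. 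Ordinary integration by parts applied to $\bv\in [H^1(T)]^d$ then converts the right-hand side to $(\nabla\bv,q)_T$, and because $q$ already lies in $[P_{k-1}(T)]^{d\times d}$ I may reinsert ${\bf Q}_h$ to obtain $({\bf Q}_h\nabla\bv,q)_T$. Since $q$ was arbitrary, the uniqueness clause of (\ref{d-g}) gives $\nabla_w(Q_h\bv)={\bf Q}_h(\nabla\bv)$ on $T$, hence globally.

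For the divergence identity (\ref{key11}) the argument is entirely parallel: pick $\varphi\in P_{k-1}(T)$, expand $(\nabla_w\cdot(Q_h\bv),\varphi)_T$ via (\ref{d-d}), note that $\nabla\varphi\in [P_{k-2}(T)]^d$ and $\varphi|_e\in P_{k-1}(e)$ so that $Q_0$ and $Q_b$ may again be dropped, integrate by parts using $\bv\in H(\mathrm{div},\Omega)$, and reintroduce $\bbQ_h$ against the degree-$(k-1)$ test function $\varphi$.

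The only point that requires genuine care is the degree accounting. One must check that $\nabla\cdot q$ and $q\cdot\bn$ for $q\in [P_{k-1}(T)]^{d\times d}$, and similarly $\nabla\varphi$ for $\varphi\in P_{k-1}(T)$, have polynomial degrees low enough (on $T$ and on each face $e\subset\pT$) to lie in the ranges of $Q_0$ and $Q_b$; otherwise, the projections could not be removed from the test slots. These verifications are immediate from the chosen polynomial spaces, and everything else is classical integration by parts combined with the defining orthogonality of ${\bf Q}_h$ and $\bbQ_h$.
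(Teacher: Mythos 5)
Your proposal is correct and follows essentially the same route as the paper's proof: expand the discrete weak gradient/divergence of $Q_h\bv$, use the orthogonality of $Q_0$ and $Q_b$ to strip the projections off the test slots, integrate by parts, and reinsert ${\bf Q}_h$ (resp.\ $\bbQ_h$) against the polynomial test function. Your explicit degree accounting (that $\nabla\cdot q$, $q\cdot\bn|_e$, and $\nabla\varphi$ land in the ranges of $Q_0$ and $Q_b$) is exactly the point the paper leaves implicit, and it is verified correctly.
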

\begin{proof}
Using (\ref{d-g}), we have
$$
(\nabla_w (Q_h \bv),\; q)_T = -(Q_0\bv,\; \nabla\cdot q)_T +\langle
Q_b\bv,\; q\cdot\bn\rangle_{\pT}
$$
for all $q\in [P_{k-1}(T)]^{d\times d}$. Next, we use the definition
of $Q_h$ and ${\bf Q}_h$ and the usual integration by parts to
obtain
\begin{eqnarray*}
-(Q_0\bv,\; \nabla\cdot q)_T +\langle Q_b\bv,\;
q\cdot\bn\rangle_{\pT}
&=&-(\bv,\; \nabla\cdot q)_T + \langle \bv,\; q\cdot\bn\rangle_{\partial T}\\
&=&(\nabla\bv,\; q)\\
&=&({\bf Q}_h(\nabla \bv),\; q).
\end{eqnarray*}
Thus,
$$
(\nabla_w (Q_h \bv),\; q)_T =({\bf Q}_h(\nabla \bv),\; q),\qquad
\forall \ q\in [P_{k-1}(T)]^{d\times d},
$$
which verifies the identity (\ref{key}).

To verify (\ref{key11}), we use the discrete weak divergence
(\ref{d-d}) to obtain
$$
(\nabla_w\cdot (Q_h \bv),\; \varphi)_T = -(Q_0\bv,\; \nabla
\varphi)_T +\langle Q_b\bv \cdot\bn,\; \varphi\rangle_{\pT}
$$
for all $\varphi\in P_{k-1}(T)$. Next, we use the definition of
$Q_h$ and $\bbQ_h$ and the usual integration by parts to arrive at
\begin{eqnarray*}
-(Q_0\bv,\; \nabla \varphi)_T +\langle Q_b\bv\cdot\bn,\;
\varphi\rangle_{\pT}
&=&-(\bv,\; \nabla \varphi)_T + \langle \bv\cdot\bn,\; \varphi\rangle_{\partial T}\\
&=&(\nabla\cdot\bv,\; \varphi)_T\\
&=&({\bbQ}_h(\nabla\cdot\bv),\; \varphi)_T.
\end{eqnarray*}
It follows that
$$
(\nabla_w\cdot (Q_h \bv),\; \varphi)_T =({\bbQ}_h(\nabla\cdot\bv),\;
\varphi)_T,\qquad\forall\ \varphi\in P_{k-1}(T).
$$
This completes the proof of (\ref{key11}), and hence the lemma.
\end{proof}

\medskip

For the bilinear form $b(\cdot,\cdot)$, we have the following result
on the {\em inf-sup} condition.

\smallskip
\begin{lemma}\label{Lemma:inf-sup}
There exists a positive constant $\beta$ independent of $h$ such
that
\begin{equation}\label{inf-sup}
\sup_{\bv\in V_h^0}\frac{b(\bv,\rho)}{\3bar\bv\3bar}\ge \beta
\|\rho\|
\end{equation}
for all $\rho\in W_h$.
\end{lemma}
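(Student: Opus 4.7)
The plan is to realize the discrete inf--sup condition by lifting the classical continuous inf--sup condition through the $L^2$-projection $Q_h=\{Q_0,Q_b\}$, exploiting the commutativity identity (\ref{key11}).

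First I would invoke the well-known continuous inf--sup condition for the Stokes problem on the Lipschitz domain $\Omega$: for any $\rho\in W_h\subset L_0^2(\Omega)$ there exists $\tilde{\bv}\in [H_0^1(\Omega)]^d$ with $\nabla\cdot\tilde{\bv}=\rho$ and $\|\tilde{\bv}\|_1\le C_\Omega\|\rho\|$. I would then set $\bv=Q_h\tilde{\bv}\in V_h$. Since $\tilde{\bv}$ vanishes on $\partial\Omega$, so does $Q_b\tilde{\bv}$, hence $\bv\in V_h^0$, which makes it a legitimate test function.

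Next I would evaluate $b(\bv,\rho)$. By Lemma \ref{lem-1-0}, equation (\ref{key11}), $\nabla_w\cdot(Q_h\tilde{\bv})=\bbQ_h(\nabla\cdot\tilde{\bv})=\bbQ_h\rho$; because $\rho|_T\in P_{k-1}(T)$ is already in the range of $\bbQ_h$, this reduces to $\rho$ itself. Thus
\begin{equation*}
b(Q_h\tilde{\bv},\rho)=(\nabla_w\cdot Q_h\tilde{\bv},\rho)=(\rho,\rho)=\|\rho\|^2.
\end{equation*}
So it only remains to bound $\3bar Q_h\tilde{\bv}\3bar$ from above by $C\|\tilde{\bv}\|_1$, and then the inf--sup ratio is at least $\|\rho\|^2/(C\|\tilde{\bv}\|_1)\ge (CC_\Omega)^{-1}\|\rho\|$.

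The main obstacle, and the place I expect to lean most heavily on the appendix, is controlling both pieces of the triple-bar norm (\ref{3barnorm}) for $Q_h\tilde{\bv}$. For the volume piece, I would use (\ref{key}) to write $\nabla_w(Q_h\tilde{\bv})={\bf Q}_h(\nabla\tilde{\bv})$, so that $\|\nabla_w(Q_h\tilde{\bv})\|_T\le\|\nabla\tilde{\bv}\|_T$ by the $L^2$-optimality of ${\bf Q}_h$. For the boundary/stabilizer piece, I would rewrite $Q_bQ_0\tilde{\bv}-Q_b\tilde{\bv}=Q_b(Q_0\tilde{\bv}-\tilde{\bv})$ and then use a shape-regular trace inequality together with the standard projection estimate $\|Q_0\tilde{\bv}-\tilde{\bv}\|_T+h_T^{1/2}\|Q_0\tilde{\bv}-\tilde{\bv}\|_{\partial T}\le Ch_T|\tilde{\bv}|_{1,T}$ (these are the kind of technical estimates promised in Appendix \ref{appendix-1}), which yields $h_T^{-1}\|Q_b(Q_0\tilde{\bv}-\tilde{\bv})\|_{\partial T}^2\le C|\tilde{\bv}|_{1,T}^2$. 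Summing over $T\in\T_h$ gives $\3bar Q_h\tilde{\bv}\3bar\le C\|\tilde{\bv}\|_1\le CC_\Omega\|\rho\|$, and choosing $\beta=(CC_\Omega)^{-1}$ closes the argument.
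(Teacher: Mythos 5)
Your proposal is correct and follows essentially the same route as the paper's proof: lift a continuous inf--sup witness $\tilde{\bv}\in[H_0^1(\Omega)]^d$ through $Q_h$, use the commutativity identity (\ref{key11}) to evaluate $b(Q_h\tilde{\bv},\rho)$, and bound $\3bar Q_h\tilde{\bv}\3bar$ by $C\|\tilde{\bv}\|_1$ via (\ref{key}) together with the trace inequality and the $L^2$-projection estimates. The only cosmetic difference is that you invoke the exact right-inverse form $\nabla\cdot\tilde{\bv}=\rho$ of the continuous inf--sup condition (so that $b(\bv,\rho)=\|\rho\|^2$), whereas the paper uses the ratio form (\ref{c-inf-sup}); both are standard and yield the same constant-tracking.
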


\begin{proof}
For any given $\rho\in W_h\subset L_0^2(\Omega)$, it is well known
\cite{bs,bf,cr,gr,gun} that there exists a vector-valued function
$\tilde\bv\in [H_0^1(\Omega)]^d$ such that
\begin{equation}\label{c-inf-sup}
\frac{(\nabla\cdot\tilde\bv,\rho)}{\|\tilde\bv\|_1}\ge C\|\rho\|,
\end{equation}
where $C>0$ is a constant depending only on the domain $\Omega$. By
setting $\bv=Q_h\tilde{\bv}\in V_h$, we claim that the following
holds true
\begin{equation}\label{m9}
\3bar\bv\3bar\le C_0 \|\tilde{\bv}\|_1
\end{equation}
for some constant $C_0$. To this end, we use equation (\ref{key}) to
obtain
\begin{equation}\label{feb-8.01}
\sum_{T\in\T_h}\|\nabla_w\bv\|^2_T=\sum_{T\in\T_h}\|\nabla_w(Q_h\tilde{\bv})\|^2_T
=\sum_{T\in\T_h}\|{\bf Q}_h\nabla \tilde{\bv}\|^2_T\le
\|\nabla\tilde\bv\|^2.
\end{equation}
Next, we use (\ref{trace}), (\ref{Qh}), and the definition of $Q_b$
to obtain
\begin{eqnarray}
\nonumber\sum_{T\in\T_h}h^{-1}_T\|Q_b\bv_0-\bv_b\|_\pT^2&=&\sum_{T\in\T_h}h^{-1}_T\|Q_b(Q_0\tilde{\bv})-Q_b\tilde{\bv}\|_\pT^2\\
&=&\sum_{T\in\T_h}h^{-1}_T\|Q_b(Q_0\tilde{\bv}-\tilde{\bv})\|_\pT^2\nonumber\\
&\le& \sum_{T\in\T_h}h^{-1}_T\|Q_0\tilde{\bv}-\tilde{\bv}\|_\pT^2\nonumber\\
&\le& C\sum_{T\in\T_h}(h^{-2}_T\|Q_0\tilde{\bv}-\tilde{\bv}\|_T^2+
\|\nabla(Q_0\tilde{\bv}-\tilde{\bv})\|_T^2)\nonumber\\
&\le& C\|\nabla \tilde{\bv}\|^2.\label{feb-8.02}
\end{eqnarray}
Combining the estimate (\ref{feb-8.01}) with (\ref{feb-8.02}) yields
the desired inequality (\ref{m9}).

It follows from (\ref{key11}) and the definition of $\bbQ_h$ that
\begin{eqnarray*}
b(\bv,\;\rho)&=&(\dw
(Q_h\tilde\bv),\;\rho)=(\bbQ_h(\nabla\cdot\tilde\bv),\;\rho)=(\nabla\cdot\tilde\bv,\;\rho).
\end{eqnarray*}
Using the above equation, (\ref{c-inf-sup}) and (\ref{m9}), we have
\begin{eqnarray*}
\frac{|b(\bv,\rho)|} {\3bar\bv\3bar} &\ge &
\frac{|(\nabla\cdot\tilde\bv,\rho)|}{C_0\|\tilde\bv\|_1}\ge
\beta\|\rho\|
\end{eqnarray*}
for a positive constant $\beta$. This completes the proof of the
lemma.
\end{proof}

\smallskip

It follows from Lemma \ref{coercivity+boundedness} and Lemma
\ref{Lemma:inf-sup} that the following solvability holds true for
the weak Galerkin finite element scheme (\ref{wg1})-(\ref{wg2}).

\smallskip
\begin{lemma}
The weak Galerkin finite element scheme (\ref{wg1})-(\ref{wg2}) has
one and only one solution.
\end{lemma}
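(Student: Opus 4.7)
The plan is to exploit the fact that the weak Galerkin scheme (\ref{wg1})--(\ref{wg2}) is a square linear system in the finite-dimensional space $V_h^0\times W_h$ (after accounting for the boundary data $\bu_b=Q_b\bg$ via a lift). Consequently, existence of a solution for every right-hand side is equivalent to uniqueness of the solution for the homogeneous problem, so it suffices to prove that $\bbf=0$ and $\bg=0$ force $\bu_h=0$ and $p_h=0$.

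Assuming these homogeneous data, I would test (\ref{wg1}) with $\bv=\bu_h\in V_h^0$ and (\ref{wg2}) with $q=p_h\in W_h$. Subtracting the two equations eliminates the pressure term $b(\bu_h,p_h)$ and yields $a(\bu_h,\bu_h)=0$. By the coercivity identity (\ref{elliptic}) of Lemma \ref{coercivity+boundedness}, this gives $\3bar\bu_h\3bar=0$, and since $\3bar\cdot\3bar$ has already been shown to be a genuine norm on $V_h^0$, we conclude $\bu_h=0$.

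Returning to (\ref{wg1}) with $\bu_h=0$, we obtain $b(\bv,p_h)=0$ for every $\bv\in V_h^0$. The inf-sup condition of Lemma \ref{Lemma:inf-sup} then forces $\|p_h\|=0$, whence $p_h=0$. This closes the uniqueness argument and, by the equivalence noted above, establishes existence. The complete argument is the standard Babu\v{s}ka--Brezzi solvability result for mixed finite element schemes, and the only nontrivial ingredients (coercivity of $a(\cdot,\cdot)$, the fact that $\3bar\cdot\3bar$ is a norm on $V_h^0$, and the discrete inf-sup condition for $b(\cdot,\cdot)$) have already been verified.

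There is no real obstacle here, since the three pillars of Brezzi's theory have been established in the preceding lemmas. The only tiny subtlety to mention is that to reduce the problem with nonhomogeneous boundary data $\bu_b=Q_b\bg$ to the homogeneous case, one picks any $\bu_h^{(g)}\in V_h$ with $\bu_b^{(g)}=Q_b\bg$ and solves for the correction in $V_h^0\times W_h$; uniqueness of the correction then follows from the argument above, and the finite-dimensionality delivers existence.
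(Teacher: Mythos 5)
Your proof is correct and is precisely the standard argument the paper leaves implicit: the paper simply remarks that solvability ``follows from Lemma \ref{coercivity+boundedness} and Lemma \ref{Lemma:inf-sup}'', and your uniqueness-implies-existence argument for the square finite-dimensional system, using $a(\bu_h,\bu_h)=\3bar\bu_h\3bar^2=0$ together with the norm property of $\3bar\cdot\3bar$ on $V_h^0$ and then the \emph{inf-sup} condition to kill $p_h$, is exactly how that citation is meant to be unpacked.
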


\section{Error Equations}\label{Section:error-equations}
Let $\bu_h=\{\bu_0,\bu_b\}\in V_h$ and $p_h\in W_h$ be the weak
Galerkin finite element solution arising from the numerical scheme
(\ref{wg1})-(\ref{wg2}). Denote by $\bu$ and $p$ the exact solution
of (\ref{moment})-(\ref{bc}). The $L^2$ projection of $\bu$ in the
finite element space $V_h$ is given by
$$
Q_h\bu=\{Q_0\bu,Q_b\bu\}.
$$
Similarly, the pressure $p$ is projected into $W_h$ as $\bbQ_h p$.
Denote by $\be_h$ and $\varepsilon_h$ the corresponding error given
by
\begin{equation}\label{EQ:errors}
\be_h=\{\be_0,\;\be_b\}=\{Q_0\bu-\bu_0,\;Q_b\bu-\bu_b\},\qquad
\varepsilon_h=\bbQ_hp-p_h.
\end{equation}
The goal of this section is to derive two equations for which the
error $\be_h$ and $\varepsilon_h$ shall satisfy. The resulting
equations are called {\em error equations}, which play a critical
role in the convergence analysis for the weak Galerkin finite
element method.

\begin{lemma}\label{Lemma:QhuEquation}
Let $(\bw;\rho)\in [H^1(\Omega)]^d\times L^2(\Omega)$ be
sufficiently smooth and satisfy the following equation
\begin{equation}\label{new-moment}
-\Delta \bw + \nabla \rho = \eta
\end{equation}
in the domain $\Omega$. Let $Q_h\bw=\{Q_0\bw,Q_b\bw\}$ and $\bbQ_h
\rho$ be the $L^2$ projection of $(\bw; \rho)$ into the finite
element space $V_h\times W_h$. Then, the following equation holds
true
\begin{equation}\label{sf-01}
(\nabla_w(Q_h\bw),\nabla_w\bv)-(\dw\bv,\bbQ_h \rho)=(\eta,
\bv_0)+\ell_{\bw}(\bv)-\theta_\rho(\bv)
\end{equation}
for all $\bv\in V_h^0$, where $\ell_{\bw}$ and $\theta_\rho$ are two
linear functionals on $V_h^0$ defined by
\begin{eqnarray*}
\ell_{\bw}(\bv)&=&\sum_{T\in\T_h}\l\bv_0-\bv_b,\ \nabla\bw\cdot\bn-{\bf Q}_h(\nabla\bw)
\cdot\bn\r_\pT,\\
\theta_\rho(\bv)&=&\sum_{T\in\T_h}\langle \bv_0-\bv_b,\
(\rho-\bbQ_h\rho)\bn\rangle_\pT.
\end{eqnarray*}
\end{lemma}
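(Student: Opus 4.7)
The plan is to connect the weak-Galerkin left-hand side to the classical right-hand side $(\eta,\bv_0)$ by integrating by parts on each element and tracking precisely how far the discrete weak operators differ from the classical ones when applied to a projected smooth function. Because $\bv_0$ is polynomial on $T$ (so $\nabla\bv_0$ and $\nabla\cdot\bv_0$ lie in the target spaces of $\mathbf{Q}_h$ and $\mathbb{Q}_h$), the projection errors in $\nabla\bw-\mathbf{Q}_h(\nabla\bw)$ and $\rho-\mathbb{Q}_h\rho$ will be invisible in volume integrals but will persist in boundary integrals, which is exactly where $\ell_{\bw}$ and $\theta_\rho$ live.

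Concretely, I would first rewrite the LHS on each element. By Lemma~\ref{lem-1-0} (the commuting property \eqref{key}), $\nabla_w(Q_h\bw)=\mathbf{Q}_h(\nabla\bw)$, so applying the definition \eqref{d-g} of the discrete weak gradient with test tensor $q=\mathbf{Q}_h(\nabla\bw)\in[P_{k-1}(T)]^{d\times d}$ gives
\begin{equation*}
(\nabla_w(Q_h\bw),\nabla_w\bv)_T
=-(\bv_0,\nabla\!\cdot\!\mathbf{Q}_h(\nabla\bw))_T+\langle\bv_b,\mathbf{Q}_h(\nabla\bw)\cdot\bn\rangle_{\pT},
\end{equation*}
and then an ordinary integration by parts converts this to $(\nabla\bv_0,\nabla\bw)_T-\langle\bv_0-\bv_b,\mathbf{Q}_h(\nabla\bw)\cdot\bn\rangle_{\pT}$, using $L^2$-orthogonality of $\mathbf{Q}_h$ against $\nabla\bv_0$. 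The analogous calculation with \eqref{d-d} for $(\nabla_w\!\cdot\!\bv,\mathbb{Q}_h\rho)_T$ yields $(\nabla\!\cdot\!\bv_0,\rho)_T-\langle(\bv_0-\bv_b)\!\cdot\!\bn,\mathbb{Q}_h\rho\rangle_{\pT}$.

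Next I would handle the RHS. Testing $-\Delta\bw+\nabla\rho=\eta$ against $\bv_0$ on each $T$ and integrating by parts gives
\begin{equation*}
(\eta,\bv_0)_T=(\nabla\bw,\nabla\bv_0)_T-(\rho,\nabla\!\cdot\!\bv_0)_T-\langle\nabla\bw\cdot\bn,\bv_0\rangle_{\pT}+\langle\rho,\bv_0\!\cdot\!\bn\rangle_{\pT}.
\end{equation*}
Summing over $T$, I can insert $\bv_b$ freely into the boundary terms because $\bv_b$ is single-valued on interior faces and vanishes on $\partial\Omega$, so the inserted contributions cancel across adjacent elements. Subtracting this from the expression for $(\nabla_w(Q_h\bw),\nabla_w\bv)-(\nabla_w\!\cdot\!\bv,\mathbb{Q}_h\rho)$ collapses the volume terms and leaves exactly
\begin{equation*}
\sum_{T\in\T_h}\langle\bv_0-\bv_b,(\nabla\bw-\mathbf{Q}_h(\nabla\bw))\!\cdot\!\bn\rangle_{\pT}\,-\,\sum_{T\in\T_h}\langle\bv_0-\bv_b,(\rho-\mathbb{Q}_h\rho)\bn\rangle_{\pT},
\end{equation*}
which is $\ell_{\bw}(\bv)-\theta_\rho(\bv)$.

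The only genuinely delicate step is the bookkeeping: one must be careful that the $\bv_b$ term inserted on interior faces in the RHS calculation really cancels (this uses the single-valuedness of $\bv_b$ together with $\bv_b|_{\partial\Omega}=0$), and that the orthogonality $\int_T(\nabla\bw-\mathbf{Q}_h\nabla\bw):\nabla\bv_0=0$ and $\int_T(\rho-\mathbb{Q}_h\rho)\nabla\!\cdot\!\bv_0=0$ is invoked precisely where needed to promote $\mathbf{Q}_h(\nabla\bw)$ and $\mathbb{Q}_h\rho$ back to $\nabla\bw$ and $\rho$ inside the volume integrals. Everything else is mechanical application of \eqref{d-g}, \eqref{d-d}, and integration by parts.
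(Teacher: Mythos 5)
Your proof is correct and follows essentially the same route as the paper's: both rest on the commuting identities of Lemma~\ref{lem-1-0}, the definitions \eqref{d-g} and \eqref{d-d}, element-wise integration by parts with the single-valuedness of $\bv_b$ cancelling interior-face terms, and the $L^2$-orthogonality of ${\bf Q}_h$ and $\bbQ_h$ against $\nabla\bv_0$ and $\nabla\cdot\bv_0$. The only difference is cosmetic bookkeeping (you integrate by parts once on each side of the identity for the pressure term, while the paper integrates by parts twice on the left-hand side to reach $-(\bv_0,\nabla\rho)$), and the resulting boundary residuals are identical.
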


\begin{proof} First, it follows from (\ref{key}), (\ref{d-g}), and the integration by parts that
\begin{eqnarray}
(\nabla_w(Q_h\bw),\nabla_w\bv)_T&=&({\bf Q}_h(\nabla\bw),\nabla_w\bv)_T\nonumber\\
&=&-(\bv_0,\nabla\cdot {\bf Q}_h(\nabla \bw))_T+\langle \bv_b,{\bf Q}_h(\nabla\bw)
\cdot\bn\rangle_\pT\nonumber\\
&=&(\nabla\bv_0,{\bf Q}_h(\nabla \bw))_T-\langle \bv_0-\bv_b,\ {\bf Q}_h(\nabla\bw)
\cdot\bn\rangle_\pT\nonumber\\
&=&(\nabla\bw,\nabla\bv_0)_T-\l \bv_0-\bv_b,{\bf
Q}_h(\nabla\bw)\cdot\bn\r_\pT.\label{m1}
\end{eqnarray}
Next, by using (\ref{key11}), (\ref{d-d}), the fact that
$\sum_{T\in\T_h}\langle \bv_b,p\;\bn\rangle_\pT=0$ and the
integration by parts, we obtain
\begin{eqnarray*}
(\dw\bv,\bbQ_h\rho)&=&-\sum_{T\in\T_h}(\bv_0, \nabla (\bbQ_h\rho))_T+
\sum_{T\in\T_h}\l\bv_b,(\bbQ_h\rho)\bn\r_\pT\nonumber\\
&=&\sum_{T\in\T_h}(\nabla\cdot\bv_0,\bbQ_h\rho)_T-\sum_{T\in\T_h}\langle \bv_0-\bv_b,(\bbQ_h\rho)
\bn\rangle_\pT\nonumber\\
&=&\sum_{T\in\T_h}(\nabla\cdot\bv_0,\rho)_T-\sum_{T\in\T_h}\langle \bv_0-\bv_b,(\bbQ_h\rho)
\bn\rangle_\pT\nonumber\\
&=&-\sum_{T\in\T_h}(\bv_0,\nabla\rho)_T+\sum_{T\in\T_h}\langle \bv_0,\rho\bn\rangle_\pT-
\sum_{T\in\T_h}\langle \bv_0-\bv_b,(\bbQ_h\rho)\bn\rangle_\pT\nonumber\\
&=&-\sum_{T\in\T_h}(\bv_0,\nabla\rho)_T+\sum_{T\in\T_h}\langle \bv_0-\bv_b,\rho\bn\rangle_\pT
-\sum_{T\in\T_h}\langle \bv_0-\bv_b,(\bbQ_h\rho)\bn\rangle_\pT\nonumber\\
&=&-(\bv_0,\nabla\rho)+\sum_{T\in\T_h}\langle
\bv_0-\bv_b,(\rho-\bbQ_h\rho)\bn\rangle_\pT,
\end{eqnarray*}
which leads to
\begin{eqnarray}
(\bv_0,\nabla\rho)&=&-(\dw\bv,\bbQ_h\rho)+\sum_{T\in\T_h}\langle
\bv_0-\bv_b,(\rho-\bbQ_h\rho)\bn\rangle_\pT.\label{m2}
\end{eqnarray}

Next we test (\ref{new-moment}) by using $\bv_0$ in
$\bv=\{\bv_0,\bv_b\}\in V_h^0$ to obtain
\begin{equation}\label{m3}
-(\nabla\cdot(\nabla\bw),\bv_0)+(\nabla\rho,\bv_0)=(\eta,\bv_0).
\end{equation}
It follows from the usual integration by parts that
\[
-(\nabla\cdot(\nabla\bw),\bv_0)=\sum_{T\in\T_h}(\nabla\bw,\
\nabla\bv_0)_T-\sum_{T\in\T_h}\l\bv_0-\bv_b,\
\nabla\bw\cdot\bn\r_\pT,
\]
where we have used the fact that $\sum_{T\in\T_h}\langle\bv_b,
\nabla\bw\cdot\bn\rangle_\pT=0$. Using (\ref{m1}) and the equation
above, we have
\begin{eqnarray}
-(\nabla\cdot(\nabla\bw),\bv_0)&=&(\nabla_w (Q_h\bw),\nabla_w\bv)\nonumber\\
&-&\sum_{T\in\T_h}\l\bv_0-\bv_b,\nabla\bw\cdot\bn-{\bf
Q}_h(\nabla\bw)\cdot\bn\r_\pT.\label{m4}
\end{eqnarray}
Substituting (\ref{m2}) and (\ref{m4}) into (\ref{m3}) yields
\[
(\nabla_w(Q_h\bw),\nabla_w\bv)-(\dw\bv,\ \bbQ_h\rho)=(\eta,
\bv_0)+\ell_{\bw}(\bv)-\theta_\rho(\bv),
\]
which completes the proof of the lemma.
\end{proof}

The following is a result on the error equation for the weak
Galerkin finite element scheme (\ref{wg1})-(\ref{wg2}).

\begin{lemma}\label{Lemma:error-equation}
Let $\be_h$ and $\varepsilon_h$ be the error of the weak Galerkin
finite element solution arising from (\ref{wg1})-(\ref{wg2}), as
defined by (\ref{EQ:errors}). Then, we have
\begin{eqnarray}
a(\be_h,\ \bv)-b(\bv,\ \varepsilon_h)&=&\varphi_{\bu,p}(\bv),\label{ee1}\\
b(\be_h,\ q)&=&0,\label{ee2}
\end{eqnarray}
for all $\bv\in V_h^0$ and $q\in W_h$, where
$\varphi_{\bu,p}(\bv)=\ell_{\bu}(\bv)-\theta_p(\bv)+s(Q_h\bu,\bv)$
is a linear functional defined on $V_h^0$.
\end{lemma}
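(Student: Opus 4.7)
The plan is to reduce the proof to a direct application of Lemma \ref{Lemma:QhuEquation} together with the commuting property (\ref{key11}) of the projection $Q_h$ with respect to weak divergence, and then to subtract the discrete scheme (\ref{wg1})--(\ref{wg2}).

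First, I would establish (\ref{ee1}). Since the exact velocity--pressure pair $(\bu,p)$ satisfies (\ref{moment}), namely $-\Delta\bu + \nabla p = \bbf$, I can invoke Lemma \ref{Lemma:QhuEquation} with the choice $\bw = \bu$, $\rho = p$, and $\eta = \bbf$. This immediately produces the identity
\begin{equation*}
(\nabla_w(Q_h\bu),\nabla_w\bv) - (\dw\bv, \bbQ_h p) = (\bbf,\bv_0) + \ell_{\bu}(\bv) - \theta_p(\bv)
\end{equation*}
for all $\bv \in V_h^0$. To turn the left-hand side into the bilinear form $a(\cdot,\cdot)$, I add the stabilizer $s(Q_h\bu,\bv)$ to both sides, obtaining
\begin{equation*}
a(Q_h\bu,\bv) - b(\bv,\bbQ_h p) = (\bbf,\bv_0) + \ell_{\bu}(\bv) - \theta_p(\bv) + s(Q_h\bu,\bv).
\end{equation*}
Subtracting the discrete momentum equation (\ref{wg1}) from this identity and using the definitions $\be_h = Q_h\bu - \bu_h$ and $\varepsilon_h = \bbQ_h p - p_h$ together with the linearity of $a$ and $b$ gives (\ref{ee1}) with the advertised linear functional $\varphi_{\bu,p}(\bv) = \ell_{\bu}(\bv) - \theta_p(\bv) + s(Q_h\bu,\bv)$.

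Next, for (\ref{ee2}), I would use the commutativity (\ref{key11}) of Lemma \ref{lem-1-0}: since $\bu \in H(\text{div},\Omega)$, we have $\dw(Q_h\bu) = \bbQ_h(\nabla\cdot\bu)$ on each $T$. Because $\nabla\cdot\bu = 0$ by the continuity equation (\ref{cont}), this yields $\dw(Q_h\bu) = 0$, hence $b(Q_h\bu,q) = 0$ for every $q \in W_h$. Subtracting the discrete continuity equation (\ref{wg2}), $b(\bu_h,q) = 0$, produces $b(\be_h,q) = 0$, which is (\ref{ee2}).

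I do not anticipate a genuine obstacle, since Lemma \ref{Lemma:QhuEquation} and the commuting diagram of Lemma \ref{lem-1-0} have already absorbed all nontrivial manipulations (integration by parts, handling of the jumps $\bv_0 - \bv_b$, and cancellation of the interior edge contributions from $\bv_b$). The only care needed is bookkeeping: one must verify that the stabilizer term $s(Q_h\bu,\bv)$ appears naturally on the right-hand side when we pass from the identity of Lemma \ref{Lemma:QhuEquation} to the bilinear form $a(\cdot,\cdot)$, and that the exact solution $\bu_h$ on the boundary, together with $\bu_b = Q_b\bg = Q_b\bu|_{\partial\Omega}$, ensures $\be_b = 0$ on $\partial\Omega$ so that $\be_h \in V_h^0$ and the identity (\ref{ee1}) makes sense as a statement over $V_h^0$.
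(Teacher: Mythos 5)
Your proposal is correct and follows essentially the same route as the paper: invoke Lemma \ref{Lemma:QhuEquation} with $(\bw;\rho;\eta)=(\bu;p;\bbf)$, add the stabilizer $s(Q_h\bu,\bv)$ to form $a(Q_h\bu,\bv)$, and subtract (\ref{wg1}); then use the commutativity (\ref{key11}) with $\nabla\cdot\bu=0$ and subtract (\ref{wg2}) for the second equation. The extra remark about $\be_b=0$ on $\partial\Omega$ (so that $\be_h\in V_h^0$) is a sensible bookkeeping check that the paper leaves implicit.
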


\begin{proof} Since $(\bu;p)$ satisfies the equation
(\ref{new-moment}) with $\eta=\bbf$, then from Lemma
\ref{Lemma:QhuEquation} we have
\[
(\nabla_w(Q_h\bu),\nabla_w\bv)-(\dw\bv,\bbQ_hp)=(\bbf,
\bv_0)+\ell_{\bu}(\bv)-\theta_p(\bv).
\]
Adding $s(Q_h\bu,\bv)$ to both side of the above equation gives
\begin{equation}\label{m5}
a(Q_h\bu,\bv)-b(\bv,\bbQ_hp)=(\bbf,\bv_0)+\ell_{\bu}(\bv)-\theta_p(\bv)
+s(Q_h\bu,\bv).
\end{equation}
The difference of (\ref{m5}) and (\ref{wg1}) yields the following equation,
\begin{eqnarray*}
a(\be_h,\bv)-b(\bv,\varepsilon_h)=\ell_{\bu}(\bv)-\theta_p(\bv)+s(Q_h\bu,\bv)
\end{eqnarray*}
for all $\bv\in V_h^0$, where
$\be_h=\{\be_0,\;\be_b\}=\{Q_0\bu-\bu_0,\;Q_b\bu-\bu_b\}$ and
$\varepsilon_h=\bbQ_hp-p_h$. This completes the derivation of
(\ref{ee1}).

As to (\ref{ee2}), we test equation (\ref{cont}) by $q\in W_h$ and
use (\ref{key11}) to obtain
\begin{equation}\label{m6}
0=(\nabla\cdot\bu,q)=(\nabla_w\cdot Q_h\bu,q).
\end{equation}
The difference of (\ref{m6}) and (\ref{wg2}) yields the following
equation
\begin{eqnarray*}
b(\be_h,q)=0
\end{eqnarray*}
for all $q\in W_h$. This completes the derivation of (\ref{ee2}).
\end{proof}

\section{Error Estimates}\label{Section:error-analysis}
In this section, we shall establish optimal order error estimates
for the velocity approximation $\bu_h$ in a norm that is equivalent
to the usual $H^1$-norm, and for the pressure approximation $p_h$ in
the standard $L^2$ norm. In addition, we shall derive an error
estimate for $\bu_h$ in the standard $L^2$ norm by applying the
usual duality argument in finite element error analysis.

\begin{theorem}\label{h1-bd}
Let $(\bu; p)\in  [H_0^1(\Omega)\cap H^{k+1}(\Omega)]^d\times
(L_0^2(\Omega)\cap H^{k}(\Omega))$ with $k\ge 1$ and $(\bu_h;p_h)\in
V_h\times W_h$ be the solution of (\ref{moment})-(\ref{bc}) and
(\ref{wg1})-(\ref{wg2}), respectively. Then, the following error
estimate holds true
\begin{eqnarray}
\3bar Q_h\bu-\bu_h\3bar+\|\bbQ_hp-p_h\|&\le& Ch^{k}(\|\bu\|_{k+1}+\|p\|_{k}).\label{err1}
\end{eqnarray}
\end{theorem}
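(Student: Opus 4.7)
The plan is to apply the standard Brezzi--Babu\v{s}ka saddle-point framework to the error equations of Lemma \ref{Lemma:error-equation}, reducing everything to one $h^{k}$-type bound on the right-hand-side functional $\varphi_{\bu,p}(\bv)=\ell_\bu(\bv)-\theta_p(\bv)+s(Q_h\bu,\bv)$ from (\ref{ee1}). First I would test (\ref{ee1}) with $\bv=\be_h\in V_h^0$; since $\varepsilon_h\in W_h$, (\ref{ee2}) gives $b(\be_h,\varepsilon_h)=0$, and Lemma \ref{coercivity+boundedness} then yields the energy identity $\3bar\be_h\3bar^{2}=\varphi_{\bu,p}(\be_h)$. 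The velocity part of (\ref{err1}) will then follow at once from the key estimate
\[
|\varphi_{\bu,p}(\bv)|\;\le\;C\,h^{k}\bigl(\|\bu\|_{k+1}+\|p\|_{k}\bigr)\,\3bar\bv\3bar
\qquad\forall\,\bv\in V_h^0.
\]

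For the pressure I would invoke the inf-sup condition: rewriting (\ref{ee1}) as $b(\bv,\varepsilon_h)=a(\be_h,\bv)-\varphi_{\bu,p}(\bv)$ and applying Lemma \ref{Lemma:inf-sup} to $\varepsilon_h\in W_h$, the boundedness of $a$ from Lemma \ref{coercivity+boundedness} combined with the velocity estimate and the key estimate give $\beta\|\varepsilon_h\|\le C h^{k}(\|\bu\|_{k+1}+\|p\|_{k})$, which completes the theorem.

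The main obstacle is proving the key estimate, since it rests on trace and projection inequalities for arbitrary WG-shape-regular polygons/polyhedra rather than on reference-element scaling; for these I would draw on the inequalities collected in Appendix \ref{appendix-1}. For $\ell_\bu(\bv)$, on each flat face $e\subset\partial T$ I would split $\bv_0-\bv_b=(\bv_0-Q_b\bv_0)+(Q_b\bv_0-\bv_b)$. Since $\bn$ is constant on $e$ and $\mathbf{Q}_h(\nabla\bu)|_T\in[P_{k-1}(T)]^{d\times d}$, the boundary flux $\mathbf{Q}_h(\nabla\bu)\cdot\bn$ lies in $[P_{k-1}(e)]^d$, so $L^2(e)$-orthogonality of $Q_b$ kills its pairing with $\bv_0-Q_b\bv_0$ and (inserting $Q_b$ on the other factor) rewrites the first piece as $\langle \bv_0-Q_b\bv_0,(I-Q_b)(\nabla\bu\cdot\bn)\rangle_e$. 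Cauchy--Schwarz together with a trace estimate $\|\bv_0-Q_b\bv_0\|_{\partial T}\le Ch_T^{1/2}\|\nabla\bv_0\|_T$ and an $O(h_T^{k-1/2})$ face-projection bound on $(I-Q_b)(\nabla\bu\cdot\bn)$ then contributes a factor $Ch_T^{k}\|\bu\|_{k+1,T}$. The second piece is controlled by pairing $h_T^{-1/2}\|Q_b\bv_0-\bv_b\|_{\partial T}$ (a direct component of $\3bar\bv\3bar$) with $h_T^{1/2}\|(I-\mathbf{Q}_h)\nabla\bu\|_{\partial T}\le Ch_T^{k}\|\bu\|_{k+1,T}$. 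The same template applied to $\theta_p(\bv)$ produces the $\|p\|_k$ contribution, and $s(Q_h\bu,\bv)$ is handled by a direct Cauchy--Schwarz on the stabilizer using $\|Q_b(Q_0\bu-\bu)\|_{\partial T}\le Ch_T^{k+1/2}\|\bu\|_{k+1,T}$. A final global Cauchy--Schwarz in $T\in\T_h$ produces the key estimate and finishes the proof.
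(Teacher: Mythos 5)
Your proposal is correct and follows essentially the same route as the paper: the energy identity $\3bar\be_h\3bar^2=\varphi_{\bu,p}(\be_h)$ from the error equations, the inf-sup condition for the pressure, and the bound $|\varphi_{\bu,p}(\bv)|\le Ch^k(\|\bu\|_{k+1}+\|p\|_k)\3bar\bv\3bar$, which the paper delegates to the estimates (\ref{mmm1})--(\ref{mmm3}) in Appendix \ref{appendix-1} and which you prove by the same splitting $\bv_0-\bv_b=(\bv_0-Q_b\bv_0)+(Q_b\bv_0-\bv_b)$ together with the trace inequality (\ref{trace}), the projection estimates (\ref{Qh})--(\ref{Lh}), and Lemma \ref{lem-11}. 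The only cosmetic difference is your extra use of the $L^2(e)$-orthogonality of $Q_b$ against $\mathbf{Q}_h(\nabla\bu)\cdot\bn$, which is valid on flat faces but not needed, since a direct Cauchy--Schwarz suffices as in the appendix.
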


\smallskip

\begin{proof}
By letting $\bv=\be_h$ in (\ref{ee1}) and $q=\varepsilon_h$ in
(\ref{ee2}) and adding the two resulting equations, we have
\begin{eqnarray}
\3bar \be_h\3bar^2&=&\varphi_{\bu, p}(\be_h).\label{main}
\end{eqnarray}
It then follows from (\ref{mmm1})-(\ref{mmm3}) (see Appendix A) that
\begin{equation}\label{b-u}
\3bar \be_h\3bar^2 \le Ch^{k}(\|\bu\|_{k+1}+\|p\|_{k})\3bar \be_h\3bar,
\end{equation}
which implies the first part of (\ref{err1}). To estimate
$\|\varepsilon_h\|$, we have from (\ref{ee1}) that
\[
b(\bv,\varepsilon_h)=a(\be_h,\bv)-\varphi_{\bu, p}(\bv).
\]
Using the equation above, (\ref{bd}), (\ref{b-u}) and
(\ref{mmm1})-(\ref{mmm3}), we arrive at
\[
|b(\bv,\varepsilon_h)|\le
Ch^{k}(\|\bu\|_{k+1}+\|p\|_{k})\3bar\bv\3bar.
\]
Combining the above estimate with the {\em inf-sup} condition
(\ref{inf-sup}) gives
\[
\|\varepsilon_h\|\le Ch^{k}(\|\bu\|_{k+1}+\|p\|_{k}),
\]
which yields the desired estimate (\ref{err1}).
\end{proof}

\medskip

In the rest of this section, we shall derive an $L^2$-error estimate
for the velocity approximation through a duality argument. To this
end, consider the problem of seeking $(\psi;\xi)$ such that
\begin{eqnarray}
-\Delta\psi+\nabla \xi&=\be_0 &\quad \mbox{in}\;\Omega,\label{dual-m}\\
\nabla\cdot\psi&=0 &\quad\mbox{in}\;\Omega,\label{dual-c}\\
\psi&= 0 &\quad\mbox{on}\;\partial\Omega.\label{dual-bc}
\end{eqnarray}
Assume that the dual problem has the $[H^{2}(\Omega)]^d\times
H^1(\Omega)$-regularity property in the sense that the solution
$(\psi; \xi)\in [H^{2}(\Omega)]^d\times H^1(\Omega)$ and the
following a priori estimate holds true:
\begin{equation}\label{reg}
\|\psi\|_{2}+\|\xi\|_1\le C\|\be_0\|.
\end{equation}

\medskip
\begin{theorem}
Let $(\bu; p)\in  [H_0^1(\Omega)\cap H^{k+1}(\Omega)]^d\times
(L^2_0(\Omega)\cap H^{k}(\Omega))$ with $k\ge 1$ and $(\bu_h;p_h)\in
V_h\times W_h$ be the solution of (\ref{moment})-(\ref{bc}) and
(\ref{wg1})-(\ref{wg2}), respectively.  Then, the following optimal
order error estimate holds true
\begin{equation}\label{l2-error}
\|Q_0\bu-\bu_0\|\le Ch^{k+1}(\|\bu\|_{k+1}+\|p\|_{k}).
\end{equation}
\end{theorem}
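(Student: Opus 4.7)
The approach is the standard Aubin--Nitsche duality argument, adapted to the weak Galerkin setting via Lemma~\ref{Lemma:QhuEquation}. I would start by solving the dual Stokes problem (\ref{dual-m})--(\ref{dual-bc}) with data $\be_0 = Q_0\bu-\bu_0$, so that the $H^2\times H^1$ regularity estimate (\ref{reg}) is at my disposal. Because $\psi$ vanishes on $\partial\Omega$, its projection $Q_h\psi$ lies in $V_h^0$ and is a legitimate test function for the primal error equation (\ref{ee1}).

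The next step is to derive a usable identity for $\|\be_0\|^2$. Applying Lemma~\ref{Lemma:QhuEquation} to $(\bw,\rho)=(\psi,\xi)$ with $\eta=\be_0$ and test $\bv=\be_h\in V_h^0$, the right-hand side term $(\eta,\bv_0)$ becomes exactly $\|\be_0\|^2$ since $(\be_h)_0=\be_0$, while the pressure coupling $(\dw\be_h,\bbQ_h\xi)$ vanishes because $\nabla\cdot\psi=0$ together with (\ref{ee2}) (after subtracting a constant from $\bbQ_h\xi$ to land in $W_h$, which costs nothing thanks to the cancellation of $\be_b$ on interior faces and its vanishing on $\partial\Omega$). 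Simultaneously, testing (\ref{ee1}) with $\bv=Q_h\psi$ and invoking $\dw(Q_h\psi)=\bbQ_h(\nabla\cdot\psi)=0$ from Lemma~\ref{lem-1-0} kills $b(Q_h\psi,\varepsilon_h)$. Eliminating the common quantity $(\nabla_w Q_h\psi,\nabla_w\be_h)$ between the two identities and expanding $\varphi_{\bu,p}$ yields the master formula
\begin{equation*}
\|\be_0\|^2 = \ell_\bu(Q_h\psi)-\theta_p(Q_h\psi)+s(Q_h\bu,Q_h\psi)-s(\be_h,Q_h\psi)-\ell_\psi(\be_h)+\theta_\xi(\be_h).
\end{equation*}

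The remaining task, and the main obstacle, is to show that each of these six terms is bounded by $Ch^{k+1}(\|\bu\|_{k+1}+\|p\|_k)\|\be_0\|$, after which (\ref{reg}) closes the loop. A blunt application of the Appendix~A estimates together with $\3bar Q_h\psi\3bar\le C\|\psi\|_1\le C\|\be_0\|$ and the previously proven bound $\3bar\be_h\3bar\le Ch^k(\|\bu\|_{k+1}+\|p\|_k)$ of Theorem~\ref{h1-bd} delivers only $\|\be_0\|^2\le Ch^k(\|\bu\|_{k+1}+\|p\|_k)\|\be_0\|$, one power of $h$ short of the target. The missing factor of $h$ has to be recovered by sharper face-by-face estimates that exploit the $L^2$-orthogonality of $Q_b$ against polynomials of degree $k-1$: for the three terms linear in $Q_h\psi$, one interpolation factor is already at the optimal primal order $h^k$, and inserting or removing $Q_b$ against the dual factor lets one replace the dual smoothness of $\psi,\xi$ (which live in $H^2\times H^1$ by (\ref{reg})) by an additional power of $h$ on each face; the three symmetric terms linear in $\be_h$ are treated the same way, with the dual side carrying the additional smoothness and the primal side the bound $\3bar\be_h\3bar=O(h^k)$. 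Summing the six sharpened estimates produces $\|\be_0\|^2\le Ch^{k+1}(\|\bu\|_{k+1}+\|p\|_k)\|\be_0\|$, which gives (\ref{l2-error}).
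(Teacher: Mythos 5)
Your proposal is correct and follows essentially the same route as the paper: the same duality argument, the same master identity $\|\be_0\|^2=\varphi_{\bu,p}(Q_h\psi)-\varphi_{\psi,\xi}(\be_h)$, and the same mechanism (inserting/removing $Q_b$ and using $\psi=0$ on $\partial\Omega$ to replace $Q_0\psi-Q_b\psi$ by $Q_0\psi-\psi$, whose weighted trace norm is $O(h)\|\psi\|_2$) to recover the extra power of $h$ in the three terms linear in $Q_h\psi$. The only minor imprecision is that the three terms linear in $\be_h$ need no sharpening at all: the Appendix estimates with $r=1$ already give $Ch(\|\psi\|_2+\|\xi\|_1)\3bar\be_h\3bar=O(h^{k+1})$, which is exactly how the paper treats them.
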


\begin{proof} Since $(\psi;\xi)$ satisfies the equation
(\ref{new-moment}) with $\eta=\be_0=Q_0\bu-\bu_0$, then from
(\ref{sf-01}) we have
\begin{eqnarray*}
(\nabla_wQ_h\psi,\nabla_w\bv)-(\nabla_w\cdot\bv,\bbQ_h\xi)=(\be_0,\bv_0)+\ell_\psi(\bv)
-\theta_\xi(\bv),\qquad \forall \bv\in V_h^0.
\end{eqnarray*}
In particular, by letting $\bv=\be_h$ we obtain
\begin{eqnarray*}
\|\be_0\|^2&=&(\nabla_wQ_h\psi,\nabla_w\be_h)-(\nabla_w\cdot\be_h,\bbQ_h\xi)-\ell_\psi(\be_h)
 +\theta_\xi(\be_h).
\end{eqnarray*}
Adding and subtracting $s(Q_h\psi,\be_h)$ in the equation above
yields
\begin{eqnarray*}
\|\be_0\|^2&=&a(Q_h
\psi,\be_h)-b(\be_h,\bbQ_h\xi)-\varphi_{\psi,\xi}(\be_h),
\end{eqnarray*}
where $\varphi_{\psi,\xi}(\bv)=\ell_\psi(\be_h)-\theta_\xi(\be_h)
+s(Q_h\psi,\be_h)$. It follows from (\ref{ee2}), (\ref{dual-c}) and
(\ref{m6}) that
$$
b(\be_h,\bbQ_h\xi)=0, \quad b(Q_h\psi,\varepsilon_h)=0.
$$
Combining the above two equations gives
\begin{eqnarray*}
\|\be_0\|^2&=&a(\be_h,Q_h\psi)-b(Q_h\psi,\varepsilon_h)-\varphi_{\psi,\xi}(\be_h).
\end{eqnarray*}
Using (\ref{ee1}) and the equation above, we have
\begin{equation}
\|\be_0\|^2=\varphi_{\bu,p}(Q_h\psi)-\varphi_{\psi,\xi}(\be_h).\label{d1}
\end{equation}

To estimate the two terms on the right hand side of (\ref{d1}), we
use the inequalities (\ref{mmm1})-(\ref{mmm3}) with
$(\bw;\rho)=(\psi;\xi)$, $\bv=\be_h$, and $r=1$ to obtain
\begin{equation}\label{sf-10}
|\varphi_{\psi,\xi}(\be_h)|\leq C h
(\|\psi\|_2+\|\xi\|_1)\3bar\be_h\3bar\leq Ch \3bar\be_h\3bar \
\|\be_0\|,
\end{equation}
where we have used the regularity assumption (\ref{reg}). Each of
the terms in $\varphi_{\bu,p}(Q_h\psi)$ can be handled as follows.

\begin{enumerate}
\item[(i)] For the stability term $s(Q_h\bu,Q_h\psi)$, we use the
definition of $Q_b$ and (\ref{trace}) to obtain
\begin{eqnarray*}
|s(Q_h\bu,\ Q_h\psi)|&=&\left|\sum_{T\in\T_h} h^{-1}_T\langle Q_b(Q_0\bu-\bu),\;
Q_b(Q_0\psi-\psi)\rangle_\pT\right|\\
&\le&\left(\sum_{T\in\T_h}h^{-1}_T\|Q_0\bu-\bu\|^2_{\pT}\right)^{1/2} \left(\sum_{T\in\T_h}h^{-1}_T\|Q_0\psi-\psi\|^2_{\pT}\right)^{1/2}\\
&\le& Ch^{k+1}\|\bu\|_{k+1}\|\psi\|_{2}.
\end{eqnarray*}

\item[(ii)] For the term $\ell_{\bu}(Q_h\psi)$, we first use the definition of $Q_b$ and
the fact that $\psi=0$ on $\partial\Omega$ to obtain
\begin{eqnarray*}
\sum_{T\in\T_h}\l \psi-Q_b\psi,\nabla\bu\cdot\bn-{\bf
Q}_h(\nabla\bu) \cdot\bn\r_\pT = \sum_{T\in\T_h}\l
\psi-Q_b\psi,\nabla\bu\cdot\bn\r_\pT = 0.
\end{eqnarray*}
Thus,
\begin{eqnarray*}
|\ell_{\bu}(Q_h\psi)|&=&\left|\sum_{T\in\T_h}\l
Q_0\psi-Q_b\psi,\nabla\bu\cdot\bn-{\bf Q}_h(\nabla\bu)
\cdot\bn\r_\pT\right|\\
&=&\left|\sum_{T\in\T_h}\l Q_0\psi-\psi,\nabla\bu\cdot\bn-{\bf
Q}_h(\nabla\bu)
\cdot\bn\r_\pT\right|\\
&\le& \left(\sum_{T\in\T_h}h_T\|\nabla\bu\cdot\bn-{\bf Q}_h(\nabla\bu)\cdot\bn\|_\pT^2\right)^{1/2}
\left(\sum_{T\in\T_h}h^{-1}_T\|Q_0\psi-\psi\|^2_{\pT}\right)^{1/2}\\
&\le& Ch^{k+1}\|\bu\|_{k+1}\|\psi\|_{2}.
\end{eqnarray*}

\item[(iii)] For the term $\theta_{p}(Q_h\psi)$, we first use the definition of $Q_b$ and
the fact that $\psi=0$ on $\partial\Omega$ to obtain
\begin{eqnarray*}
\sum_{T\in\T_h}\langle
\psi-Q_b\psi,(p-\bbQ_hp)\bn\rangle_\pT=\sum_{T\in\T_h}\langle
\psi-Q_b\psi,p\bn\rangle_\pT=0.
\end{eqnarray*}
Thus, from (\ref{trace}) and (\ref{Lh}) we obtain
\begin{eqnarray*}
|\theta_p(Q_h\psi)|&=&\left|\sum_{T\in\T_h}\langle Q_0\psi-Q_b\psi,(p-\bbQ_hp)\bn\rangle_\pT\right|\\
&=&\left|\sum_{T\in\T_h}\langle
Q_0\psi-\psi,(p-\bbQ_hp)\bn\rangle_\pT\right|\\
&\le& \left(\sum_{T\in\T_h}h_T\| p-\bbQ_hp\|_\pT^2\right)^{1/2}\left(\sum_{T\in\T_h}h^{-1}_T
\|Q_0\psi-\psi\|^2_{\pT}\right)^{1/2}\\
&\le& Ch^{k+1}\|p\|_{k} \|\psi\|_{2}.
\end{eqnarray*}

\end{enumerate}

The three estimates in (i), (ii), (iii), and the regularity
(\ref{reg}) collectively yield
\begin{eqnarray}
|\varphi_{\bu,p}(Q_h\psi)|&\leq & C h^{k+1}
(\|\bu\|_{k+1}+\|p\|_k)\|\psi\|_2\nonumber\\
&\leq & C h^{k+1} (\|\bu\|_{k+1}+\|p\|_k)\|\be_0\|.\label{sf-15}
\end{eqnarray}

Finally, substituting (\ref{sf-10}) and (\ref{sf-15}) into
(\ref{d1}) gives
$$
\|\be_0\|^2\le Ch^{k+1}(\|\bu\|_{k+1}+\|p\|_{k})\|\be_0\| + Ch
\3bar\be_h\3bar \ \|\be_0\|.
$$
It follows that
$$
\|\be_0\|\le Ch^{k+1}(\|\bu\|_{k+1}+\|p\|_{k}) + Ch \3bar\be_h\3bar,
$$
which, together with Theorem \ref{h1-bd}, completes the proof of the
theorem.
\end{proof}

\section{Concluding Remarks}\label{Section:concluding}

This paper introduced a new finite element method for the Stokes
equations by using the general concept of weak Galerkin. The scheme
is applicable to finite element partitions of arbitrary polygon or
polyhedra. The paper has laid a solid theoretical foundation for the
stability and convergence of the weak Galerkin method. There are,
however, many open issues that need to be investigated in future
work. Here we would like to list a few for interested readers to
consider: (1) how the discretized linear systems can be solved
efficiently by using techniques such as domain decomposition and
multigrids? (2) can the weak Galerkin scheme for the Stokes
equations be hybridized? If so, how such a hybridization may help in
variable reduction and solution solving? and (3) what
superconvergence can one develop for the weak Galerkin method? (4)
is the weak Galerkin method more competitive than other existing
finite element schemes in practical computation? (5) what stability
do weak Galerkin methods have in other norms such as $L^p, p>1$?

\appendix
\section*{Local $L^2$ Projections}\label{appendix-1}

\setcounter{section}{1}

In this Appendix, we shall provide some technical results regarding
approximation properties for the $L^2$ projection operators $Q_h$,
${\bf Q}_h$, and $\bbQ_h$. These estimates have been employed in
previous sections to yield various error estimates for the weak
Galerkin finite element solution of the Stokes problem arising from
the scheme (\ref{wg1})-(\ref{wg2}).

\begin{lemma}\label{lem-1}
Let $\T_h$ be a finite element partition of $\Omega$ satisfying the
shape regularity assumption as specified in \cite{wy-mixed} and
$\bw\in [H^{r+1}(\Omega)]^d$ and $\rho\in H^{r}(\Omega)$ with $1\le
r\le k$. Then, for $0\le s\le 1$ we have
\begin{eqnarray}
&&\sum_{T\in\T_h} h^{2s}_T\|\bw-Q_0\bw\|_{T,s}^2\le h^{2(r+1)}
\|\bw\|^2_{r+1},\label{Qh}\\
&&\sum_{T\in\T_h} h^{2s}_T\|\nabla\bw-{\bf Q}_h(\nabla\bw)\|^2_{T,s}
\le Ch^{2r}
\|\bw\|^2_{r+1},\label{Rh}\\
&&\sum_{T\in\T_h} h^{2s}_T\|\rho-\bbQ_h\rho\|^2_{T,s} \le
Ch^{2r}\|\rho\|^2_{r}.\label{Lh}
\end{eqnarray}
Here $C$ denotes a generic constant independent of the meshsize $h$
and the functions in the estimates.
\end{lemma}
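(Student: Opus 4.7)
The plan is to establish the three estimates \eqref{Qh}--\eqref{Lh} element by element via a Bramble--Hilbert argument adapted to the WG shape-regular polygonal/polyhedral setting, then sum over all $T\in\T_h$ and use the uniform bound $h_T\le h$. The shape-regularity assumption imported from \cite{wy-mixed} guarantees that each element $T$ is star-shaped with respect to a ball of radius comparable to $h_T$, which is precisely the hypothesis needed to apply the averaged Taylor polynomial construction of Dupont--Scott uniformly across the partition.

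First, I would fix an element $T$ and a function $\phi$ playing the role of $\bw$, $\nabla\bw$, or $\rho$. Let $\Pi_m\phi\in P_m(T)$ denote the Dupont--Scott averaged Taylor polynomial of degree at most $m$. Under WG shape regularity, this satisfies
\begin{equation*}
\|\phi-\Pi_m\phi\|_{s,T}\le C\,h_T^{m+1-s}\,|\phi|_{m+1,T}, \qquad 0\le s\le m+1,
\end{equation*}
with $C$ depending only on $m$ and the shape-regularity constants. Since $Q_0$, ${\bf Q}_h$, and $\bbQ_h$ are all $L^2$ best approximations onto their respective polynomial spaces, the $s=0$ case of each of \eqref{Qh}--\eqref{Lh} follows immediately by comparing with $\Pi_k$ (for \eqref{Qh}) and $\Pi_{k-1}$ (for \eqref{Rh} and \eqref{Lh}).

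For $s=1$, I would apply the triangle inequality,
\begin{equation*}
\|\phi-Q_0\phi\|_{1,T}\le \|\phi-\Pi_k\phi\|_{1,T}+\|\Pi_k\phi-Q_0\phi\|_{1,T},
\end{equation*}
control the first term directly by $Ch_T^r|\phi|_{r+1,T}$, and control the second term by the standard inverse inequality on $P_k(T)$,
\begin{equation*}
\|\Pi_k\phi-Q_0\phi\|_{1,T}\le Ch_T^{-1}\|\Pi_k\phi-Q_0\phi\|_T\le Ch_T^{-1}\bigl(\|\phi-\Pi_k\phi\|_T+\|\phi-Q_0\phi\|_T\bigr),
\end{equation*}
both of whose right-hand pieces are already bounded by $Ch_T^{r+1}|\phi|_{r+1,T}$ from the $s=0$ step. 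Multiplying by $h_T^{2s}$ with $s=1$ and summing over $T\in\T_h$ yields \eqref{Qh}. The identical scheme with $\phi=\nabla\bw$ and target degree $k-1$ (noting $|\nabla\bw|_{r,T}\le |\bw|_{r+1,T}$) produces \eqref{Rh}, and with $\phi=\rho$ and target degree $k-1$ produces \eqref{Lh}.

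The main obstacle is that both the Dupont--Scott approximation estimate and the inverse inequality on $P_k(T)$ must hold with constants \emph{independent of the element} when $T$ is an arbitrary polygon or polyhedron rather than a simplex. This is where the WG shape-regularity assumption of \cite{wy-mixed} does all the work: the uniform existence of an interior star-shaped ball of radius $\sim h_T$ gives Dupont--Scott uniformly, and the equivalence of norms on the finite-dimensional space $P_k$ under affine scaling of such a ball yields the inverse inequality with a constant depending only on $k$ and the shape-regularity parameters. Once these two tools are in place with uniform constants, the remainder of the proof is standard polynomial approximation theory.
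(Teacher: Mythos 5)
Your argument is essentially correct, and it is worth noting that the paper itself does not prove this lemma at all: it simply defers to \cite{wy-mixed}, remarking that the proof there rests on ``technical inequalities for functions defined on polygon/polyhedral elements with shape regularity.'' Your route --- Dupont--Scott averaged Taylor polynomials for the $s=0$ case, then a triangle inequality plus an inverse inequality on $P_k(T)$ for the $s=1$ case, with uniformity of all constants supplied by the star-shapedness built into the WG shape-regularity assumption --- is the standard way to carry this out on polytopal elements and is consistent with what the cited reference does. You also correctly identify where the real content lies: not in the polynomial approximation theory itself but in showing that the Bramble--Hilbert and inverse-inequality constants are element-independent for arbitrary shape-regular polygons/polyhedra, which is exactly the role of the hypotheses imported from \cite{wy-mixed}. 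Two small points. First, the lemma as stated allows any $0\le s\le 1$, while you only treat the endpoints $s=0$ and $s=1$; this is filled in either by interpolation of Sobolev norms or by quoting the fractional-order form of the Dupont--Scott estimate, and in any case the paper only ever invokes the lemma with $s\in\{0,1\}$. Second, the absence of a constant $C$ in \eqref{Qh} as printed is evidently a typographical slip, and your version with the generic constant is the correct reading.
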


A proof of the lemma can be found in \cite{wy-mixed}, which is based
on some technical inequalities for functions defined on
polygon/polyhedral elements with shape regularity. We emphasize that
the approximation error estimates in Lemma \ref{lem-1} hold true
when the underlying mesh $\T_h$ consists of arbitrary polygons or
polyhedra with shape regularity as detailed in \cite{wy-mixed} and
\cite{mwy}.

\smallskip
Let $T$ be an element with $e$ as an edge/face. For any function
$g\in H^1(T)$, the following trace inequality has been proved to be
valid for general meshes satisfying the shape regular assumptions
detailed in \cite{wy-mixed}:
\begin{equation}\label{trace}
\|g\|_{e}^2 \leq C \left( h_T^{-1} \|g\|_T^2 + h_T \|\nabla
g\|_{T}^2\right).
\end{equation}

\begin{lemma}\label{lem-11}
For any $\bv=\{\bv_0,\bv_b\}\in V_h$, we have
\begin{equation}\label{mad}
\sum_{T\in\T_h}\|\nabla \bv_0\|_T^2\le C\3bar v\3bar^2.
\end{equation}
\end{lemma}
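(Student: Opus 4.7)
The plan is to exploit the defining equation of the discrete weak gradient to express $\|\nabla \bv_0\|_T^2$ in terms of $\nabla_w \bv$ and the jump quantity $Q_b\bv_0 - \bv_b$. Since $\bv_0 \in [P_k(T)]^d$ on each element $T$, its classical gradient $\nabla \bv_0$ lies in $[P_{k-1}(T)]^{d \times d}$, which is exactly the test space used to define $\nabla_{w,k-1,T}$.

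First I would test (\ref{d-g}) with $\tau = \nabla \bv_0 \in [P_{k-1}(T)]^{d\times d}$ and apply integration by parts, following the same manipulation already used in the verification of the positive length property of $\3bar\cdot\3bar$ in Section \ref{Section:stability}. This yields
\begin{equation*}
(\nabla_w \bv, \nabla \bv_0)_T = (\nabla \bv_0, \nabla \bv_0)_T - \langle Q_b\bv_0 - \bv_b, \nabla \bv_0 \cdot \bn\rangle_{\partial T},
\end{equation*}
so that
\begin{equation*}
\|\nabla \bv_0\|_T^2 = (\nabla_w \bv, \nabla \bv_0)_T + \langle Q_b\bv_0 - \bv_b, \nabla \bv_0 \cdot \bn\rangle_{\partial T}.
\end{equation*}

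Next I would bound the two terms on the right. The first is handled directly by Cauchy--Schwarz. For the boundary term, I would apply Cauchy--Schwarz on $\partial T$ and then control $\|\nabla \bv_0\|_{\partial T}$ via the trace inequality (\ref{trace}) combined with the standard inverse inequality $\|\nabla^2 \bv_0\|_T \le C h_T^{-1}\|\nabla \bv_0\|_T$ (valid on shape-regular polygonal/polyhedral elements for polynomials). This yields $\|\nabla \bv_0\|_{\partial T}^2 \le C h_T^{-1} \|\nabla \bv_0\|_T^2$, and consequently
\begin{equation*}
\|\nabla \bv_0\|_T^2 \le \|\nabla_w \bv\|_T \|\nabla \bv_0\|_T + C h_T^{-1/2}\|Q_b\bv_0 - \bv_b\|_{\partial T} \|\nabla \bv_0\|_T.
\end{equation*}
Dividing through by $\|\nabla \bv_0\|_T$, squaring, summing over $T\in\T_h$, and recognizing the two pieces of $\3bar\bv\3bar^2$ in (\ref{3barnorm}) would give the desired bound (\ref{mad}).

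The only non-routine step is justifying the inverse-trace inequality $\|\nabla \bv_0\|_{\partial T}^2 \le C h_T^{-1}\|\nabla \bv_0\|_T^2$ on arbitrary shape-regular polygons/polyhedra, which I would cite from \cite{wy-mixed} rather than re-derive, as the same shape-regularity framework underlies (\ref{trace}). Everything else is algebraic manipulation of the definition of $\nabla_w$ and two applications of Cauchy--Schwarz, so this trace-inverse step is really the only place where the polygonal-mesh hypothesis is doing work.
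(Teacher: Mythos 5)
Your proposal is correct and follows essentially the same route as the paper: both derive the identity $\|\nabla \bv_0\|_T^2=(\nabla_w\bv,\nabla\bv_0)_T+\langle Q_b\bv_0-\bv_b,\nabla\bv_0\cdot\bn\rangle_{\partial T}$ by testing the weak gradient with $\nabla\bv_0\in[P_{k-1}(T)]^{d\times d}$ and integrating by parts, then conclude with Cauchy--Schwarz together with the trace inequality (\ref{trace}) and an inverse inequality. The only (cosmetic) difference is that the paper starts from $(\nabla\bv_0,\nabla\bv_0)_T$ and works toward $\nabla_w\bv$, while you run the same computation in the opposite direction.
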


\begin{proof}
For any $\bv=\{\bv_0,\bv_b\}\in V_h$, it follows from the
integration by parts and the definitions of weak gradient and $Q_b$,
\begin{eqnarray*}
(\nabla \bv_0,\nabla \bv_0)_T&=& -(\bv_0,\nabla\cdot\nabla \bv_0)_T+\l \bv_0, \nabla \bv_0\cdot\bn\r_\pT\\
&=& -(\bv_0,\nabla\cdot\nabla \bv_0)_T+\l \bv_b, \nabla \bv_0\cdot\bn\r_\pT+\l \bv_0-\bv_b, \nabla \bv_0\cdot\bn\r_\pT\\
&=&(\nabla_w\bv,\nabla \bv_0)_T+\l Q_b\bv_0-\bv_b, \nabla
\bv_0\cdot\bn\r_\pT.
\end{eqnarray*}
By applying the trace inequality (\ref{trace}) and the inverse
inequality to the equation above, we obtain
\[
\|\nabla \bv_0\|_T^2\le C( \|\nabla_w\bv\|_T\|\nabla
\bv_0\|_T+h_T^{-\frac12}\|Q_b\bv_0-\bv_b\|_\pT\|\nabla \bv_0\|_T).
\]
Thus,
$$
\|\nabla \bv_0\|_T^2\le
C(\|\nabla_w\bv\|_T^2+h_T^{-1}\|Q_b\bv_0-\bv_b\|_\pT^2),
$$
which gives rise to (\ref{mad}) after a summation over all
$T\in\T_h$.
\end{proof}

\smallskip

\begin{lemma}
Let $1\le r\le k$ and $\bw\in [H^{r+1}(\Omega)]^d$ and $\rho\in
H^r(\Omega)$ and $\bv\in V_h$. Assume that the finite element
partition $\T_h$ is shape regular. Then, the following estimates
hold true
\begin{eqnarray}
|s(Q_h\bw,\bv)|&\le& Ch^{r}\|\bw\|_{r+1}\3bar \bv\3bar,\label{mmm1}\\
|\ell_{\bw}(\bv)|&\le& Ch^{r}\|\bw\|_{r+1}\3bar \bv\3bar,\label{mmm2}\\
|\theta_\rho(\bv)|&\le& Ch^{r}\|\rho\|_{r}\3bar
\bv\3bar,\label{mmm3}
\end{eqnarray}
where $\ell_{\bw}(\cdot)$ and $\ell_\rho(\cdot)$ are two linear
functionals on $V_h$ given by
\begin{eqnarray}
\ell_{\bw}(\bv)&=&\sum_{T\in\T_h}\l\bv_0-\bv_b,\nabla\bw\cdot\bn-{\bf
Q}_h(\nabla\bw)\cdot\bn\r_\pT,\label{l1-inLemma}\\
\theta_\rho(\bv)&=&\sum_{T\in\T_h}\langle
\bv_0-\bv_b,(\rho-\bbQ_h\rho)\bn\rangle_\pT.\label{l2-inLemma}
\end{eqnarray}
\end{lemma}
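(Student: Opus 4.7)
The plan is to establish each of the three bounds (\ref{mmm1})--(\ref{mmm3}) by a Cauchy--Schwarz argument on edges, splitting each sum into a ``WG-side'' factor controlled by $\3bar\bv\3bar$ and a ``data-side'' factor controlled by the approximation estimates of Lemma \ref{lem-1}.

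For (\ref{mmm1}), I would first use linearity and idempotence of $Q_b$ to rewrite
$$
s(Q_h\bw,\bv)=\sum_{T\in\T_h} h_T^{-1}\langle Q_b(Q_0\bw-\bw),\ Q_b\bv_0-\bv_b\rangle_{\partial T}.
$$
Cauchy--Schwarz together with the $L^2$-boundedness of $Q_b$ gives
$$
|s(Q_h\bw,\bv)|\leq \Big(\sum_{T\in\T_h} h_T^{-1}\|Q_0\bw-\bw\|^2_{\partial T}\Big)^{1/2}\3bar\bv\3bar,
$$
and the first factor is then handled by the trace inequality (\ref{trace}) applied to $Q_0\bw-\bw$ and the $L^2$-projection bound (\ref{Qh}) with $s=0,1$, producing a factor of $h^r\|\bw\|_{r+1}$.

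For (\ref{mmm2}) and (\ref{mmm3}) the strategy is analogous. Writing
$$
|\ell_{\bw}(\bv)|\leq \Big(\sum_{T\in\T_h} h_T^{-1}\|\bv_0-\bv_b\|^2_{\partial T}\Big)^{1/2} \Big(\sum_{T\in\T_h} h_T\|\nabla\bw-{\bf Q}_h(\nabla\bw)\|^2_{\partial T}\Big)^{1/2}
$$
(and the analogue for $\theta_\rho$ with $\rho-\bbQ_h\rho$ in place), the second factor is controlled via (\ref{trace}) and (\ref{Rh}) (respectively (\ref{Lh})) at $s=0$ and $s=1$, yielding $h^r\|\bw\|_{r+1}$ (respectively $h^r\|\rho\|_r$).

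The main obstacle is the first factor $\sum_T h_T^{-1}\|\bv_0-\bv_b\|^2_{\partial T}$, which is not one of the terms defining $\3bar\bv\3bar^2$. The plan is to split $\bv_0-\bv_b=(\bv_0-Q_b\bv_0)+(Q_b\bv_0-\bv_b)$; the second piece is exactly the stabilization contribution to $\3bar\bv\3bar^2$. For the first piece, since $\bv_0|_e$ is a polynomial of degree $k$ on each edge $e\subset\partial T$, $Q_b\bv_0$ is its best $L^2$-approximation in $[P_{k-1}(e)]^d$, so subtracting the element-average $\bar\bv_0$ of $\bv_0$ over $T$ and applying (\ref{trace}) together with a Poincar\'e-type inequality on $T$ yields $h_T^{-1}\|\bv_0-Q_b\bv_0\|^2_{\partial T}\leq C\|\nabla\bv_0\|_T^2$. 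Summing over $T$ and invoking Lemma \ref{lem-11} gives $\sum_T h_T^{-1}\|\bv_0-\bv_b\|^2_{\partial T}\leq C\3bar\bv\3bar^2$, which combines with the data-side estimates to complete the proof of all three bounds.
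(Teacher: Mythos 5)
Your proposal is correct and follows essentially the same route as the paper: Cauchy--Schwarz on element boundaries, the splitting $\bv_0-\bv_b=(\bv_0-Q_b\bv_0)+(Q_b\bv_0-\bv_b)$, the trace inequality (\ref{trace}) combined with the projection estimates (\ref{Qh})--(\ref{Lh}) for the data factors, and Lemma \ref{lem-11} to absorb the $\bv_0-Q_b\bv_0$ piece into $\3bar\bv\3bar$. The only cosmetic differences are that you invoke the $L^2$-boundedness of $Q_b$ where the paper moves $Q_b$ onto the other factor, and you use a Poincar\'e/best-approximation argument where the paper uses an inverse inequality; both lead to the same bounds.
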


\begin{proof}
Using the definition of $Q_b$, (\ref{trace}) and (\ref{Qh}), we have
\begin{eqnarray*}
|s(Q_h\bw,\ \bv)|&=&\left|\sum_{T\in\T_h} h_T^{-1}\langle Q_b(Q_0\bw)-Q_b\bw,\; Q_b\bv_0-\bv_b\rangle_\pT\right|\\
&=&\left|\sum_{T\in\T_h} h_T^{-1}\langle Q_b(Q_0\bw-\bw),\; Q_b\bv_0-\bv_b\rangle_\pT\right|\\
&=&\left|\sum_{T\in\T_h} h_T^{-1} \langle Q_0\bw-\bw,\; Q_b\bv_0-\bv_b\rangle_\pT\right|\\
&\le& \left(\sum_{T\in\T_h}(h_T^{-2}\|Q_0\bw-\bw\|_T^2+\|\nabla (Q_0\bw-\bw)\|_T^2)\right)^{1/2}\\
& & \ \left(\sum_{T\in\T_h}h_T^{-1}\|Q_b\bv_0-\bv_b\|^2_{\pT}\right)^{1/2}\\
&\le& Ch^{r}\|\bw\|_{r+1}\3bar \bv\3bar.
\end{eqnarray*}
It follows from (\ref{trace}) and (\ref{Rh}) that
\begin{eqnarray*}
|\ell_{\bw}(\bv)|&=&\left|\sum_{T\in\T_h}\l\bv_0-\bv_b,\ \nabla\bw\cdot\bn-{\bf Q}_h(\nabla\bw)\cdot\bn\r_\pT\right|\\
&\le&\left|\sum_{T\in\T_h}\l\bv_0-Q_b\bv_0,\ \nabla\bw\cdot\bn-{\bf Q}_h(\nabla\bw)\cdot\bn\r_\pT\right|\\
&+&\left|\sum_{T\in\T_h}\l Q_b\bv_0-\bv_b,\ \nabla\bw\cdot\bn-{\bf
Q}_h(\nabla\bw)\cdot\bn\r_\pT\right|.
\end{eqnarray*}
To estimate the first term on the righ-hand side of the above
inequality, we use (\ref{trace}), (\ref{Rh}), (\ref{mad}) and the
inverse inequality to obtain
\begin{eqnarray*}
&&\left|\sum_{T\in\T_h}\l\bv_0-Q_b\bv_0,\ \nabla\bw\cdot\bn-{\bf Q}_h(\nabla\bw)\cdot\bn\r_\pT\right|\\
\le && C\sum_{T\in {\cal T}_h} h_T\|\nabla\bw\cdot\bn-{\bf Q}_h(\nabla\bw)\cdot\bn\|_\pT\|\nabla\bv_0\|_\pT\\
\le && C\left(\sum_{T\in {\cal T}_h}h_T\|\nabla\bw\cdot\bn-{\bf Q}_h(\nabla\bw)\cdot\bn\|_\pT^2\right)^{1/2}\left(\sum_{T\in {\cal T}_h}\|\nabla\bv_0\|_\pT^2\right)^{1/2}\\
\le && Ch^{r}\|\bw\|_{r+1}\3bar \bv\3bar.
\end{eqnarray*}
Similarly, for the second term, we have
\begin{eqnarray*}
&&\left|\sum_{T\in\T_h}\l Q_b\bv_0-\bv_b,\ \nabla\bw\cdot\bn-{\bf Q}_h(\nabla\bw)\cdot\bn\r_\pT\right|\\
\le && C\left(\sum_{T\in {\cal T}_h}h_T\|\nabla\bw\cdot\bn-{\bf Q}_h(\nabla\bw)\cdot\bn\|_\pT^2\right)^{1/2}\left(\sum_{T\in {\cal T}_h}h_T^{-1}\|Q_b\bv_0-\bv_b\|_\pT^2\right)^{1/2}\\
\le && Ch^{r}\|\bw\|_{r+1}\3bar \bv\3bar.
\end{eqnarray*}
The estimate (\ref{mmm2}) is verified by combining the above three
estimates.

The same technique for proving (\ref{mmm2}) can be applied to yield
the following estimate.
\begin{eqnarray*}
|\theta_\rho(\bv)|&=&\left|\sum_{T\in\T_h}\langle \bv_0-\bv_b,\ (\rho-\bbQ_h\rho)\bn\rangle_\pT\right|\\
&\le& Ch^{r}\|\rho\|_{r}\3bar \bv\3bar.
\end{eqnarray*}
This completes the proof of the lemma.
\end{proof}

\newpage


\begin{thebibliography}{99}

\bibitem{babuska}
{\sc I. Babu\u{s}ka},
{\em The finite element method with Lagrangian multiplier},
Numer. Math., 20 (1973), pp.~179--192.

\bibitem{bs}
{\sc S. Brenner and R. Scott},
{\em Mathematical theory of finite element methods}, Springer, 2002.

\bibitem{brezzi}
{\sc F. Brezzi},
{\em On the existence, uniqueness, and approximation of saddle point
problems arising from Lagrangian multipliers},
RAIRO, Anal. Num\'{e}r.,  2 (1974), pp.~129--151.


\bibitem{bf}
{\sc F. Brezzi and M. Fortin},
\textit{Mixed and Hybrid Finite Elements},
Springer-Verlag, New York, 1991.


\bibitem{cr}
{\sc M. Crouzeix and P. A. Raviart},  \textit{Conforming and nonconforming
finite element methods for solving the stationary Stokes
equations}, RAIRO Anal. Numer., 7 (1973), pp.~33--76.

\bibitem{gr}
{\sc V. Girault and P.A. Raviart},
\textit{Finite Element Methods for
the Navier-Stokes Equations: Theory and Algorithms}, Springer-Verlag, Berlin, 1986.

\bibitem{gun}
{\sc M. D. Gunzburger},
\textit{Finite Element Methods for Viscous Incompressible Flows,
A Guide to Theory, Practice and Algorithms}, Academic,
San Diego, 1989.

\bibitem{mwy}
{\sc L. Mu, J. Wang, and X. Ye}, {\em Weak Galerkin finite element
methods on Polytopal Meshes}, arXiv:1204.3655v2.

\bibitem{wy}
{\sc J. Wang and X. Ye}, {\em A weak Galerkin finite element method
for second-order elliptic problems},  J. Comp. and Appl. Math, 241 (2013) 103-115.

\bibitem{wy-mixed}
{\sc J. Wang and X. Ye}, {\em A Weak Galerkin mixed finite element method for
second-order elliptic problems}, arXiv:1202.3655v1.

\end{thebibliography}
\end{document}